\documentclass[12pt,a4paper]{article}
\usepackage[a4paper]{geometry}
\usepackage[T1]{fontenc}
\usepackage[utf8]{inputenc}
\usepackage[english]{babel}
\usepackage{lmodern}
\usepackage{amsmath,amsfonts,amssymb}
\usepackage{amsthm}
\usepackage{authblk}
\usepackage{blindtext}
\usepackage{hyperref}

\theoremstyle{definition}
\newtheorem{theorem}{Theorem}
\newtheorem{Def}{Definition}
\newtheorem{prop}{Proposition}
\newtheorem{cor}{Corollary}
\newtheorem{lem}{Lemma}
\newtheorem{rem}{Remark}
\newtheorem{ex}{Example}

\newcommand{\email}[1]{\href{mailto:#1}{\nolinkurl{#1}}}
\newcommand{\keywords}[1]{\textit{Keywords:} \ #1}
\newcommand{\subjclass}[2]{2010 \textit{Mathematics Subject Classification:} \ #1}

\title{Discrete correlation of order 2 of generalized Rudin--Shapiro sequences on alphabets of arbitrary size}
\author{Pierre-Adrien Tahay \\ email: \href{mailto:pierre-adrien.tahay@univ-lorraine.fr}{pierre-adrien.tahay@univ-lorraine.fr}}
\affil{Universit\'e de Lorraine, IECL, F-54000 Nancy, France}

\begin{document}
\date{}
\maketitle

\setlength\parindent{0em}

\subjclass{11A63, 11K31, 68R15}

\keywords{discrete correlation, Rudin--Shapiro sequence, difference matrix, exponential sums}

\begin{abstract}

In 2009, Grant, Shallit, and Stoll~\cite{grant_bounds_2009} constructed a large family of pseudorandom sequences, called generalized Rudin--Shapiro sequences, for which they established some results about the average of discrete correlation coefficients of order 2 in cases where the size of the alphabet is a prime number or a squarefree product of primes. We establish similar results for an even larger family of pseudorandom sequences, constructed via difference matrices, in the case of an alphabet of any size. The constructions generalize those from~\cite{grant_bounds_2009}. In the case where the size of the alphabet is squarefree and where there are at least two prime factors, we obtain an improvement in the error term by comparison with the result of Grant et al.~\cite{grant_bounds_2009}. 
\end{abstract}

\section{Introduction}

In 1997 and 1998, Mauduit and Sárközy published two papers~\cite{mauduit_finite_1997,mauduit_finite_1998} about pseudorandom sequences, i.e., deterministic sequences on finite alphabets sharing similar properties with random sequences. Various results, in particular the pseudorandomness of the Legendre symbol and the correlation of Champernowne, Thue--Morse and Rudin--Shapiro (or Golay--Rudin--Shapiro) sequences have been established. There exists a large literature on the subject. We refer to the recent papers~\cite{KatzSequencesLowCorrelation2018,KoniecznyGOWERSNORMSTHUEMORSE,MauduitPrimenumbersRudin2015,MauduitRudinShapirosequences2018,MeraimeasurespseudorandomnessNIST2018,QueffelecQuestionsThueMorseSequence2018} and their bibliographic references. In the same way as Grant et al.~\cite{grant_bounds_2009}, our work concerns the explicit construction of sequences with good discrete correlation properties. We extend their construction to get similar correlation properties of suitably generalized Rudin--Shapiro sequences valid for all alphabets. In the case where the size of the alphabet is a power of a prime, the error term obtained is the same for all powers and the same as the one of Grant et al.~\cite{grant_bounds_2009} when the power is equal to 1. When the size is a product of several powers of prime numbers, the error term is also independent of the powers chosen, but in the case where all the exponents in the powers are equal to 1, we obtain an improvement of the error term with respect to the result of Grant et al.~\cite{grant_bounds_2009}. Moreover, with our construction it is possible to recover the one of Grant et al.~\cite{grant_bounds_2009}.

\section{Definitions and state-of-the-art}

Throughout the paper, we use $\mathbb{Z}_p=\mathbb{Z}/p\mathbb{Z}$, $\mathbb{Z}^k_p=\underbrace{\mathbb{Z}/p\mathbb{Z}\times\cdots\times\mathbb{Z}/p\mathbb{Z}}_{k}$, and $\text{e}(x)=e^{2i\pi x}$ for all $x \in \mathbb{R}$. We make use of the usual Landau notation $O()$ for the error terms. We may use indices to indicate the dependence of the implied constant (such as $O_{k}()$ for a possible dependence on $k$). We also make use of the classical Vinogradov notation $\ll$.

\begin{Def}\label{def1}
Let $k \geq 2$ be an integer and let $x=x_0x_1\cdots$ be an infinite word on the alphabet $\left\{0,1,\ldots,k-1\right\}$. For a vector $(i,j)$ satisfying $0\leq i<j$ we define the \emph{discrete correlation coefficient} $\delta(i,j)$ of order $2$ by
$$\delta(i,j)=\begin{cases}
0,&\text{if } x_{i}=x_{j},\\
1,&\text{else.}
\end{cases}$$

Moreover, we define $C_{\textbf{r}}$ for all $\textbf{r}=(r_1,r_2)$ with $0\leq r_1<r_2$ by
$$C_{\textbf{r}}=\liminf\limits_{N\rightarrow\infty}\cfrac{1}{N}\displaystyle\sum_{n<N}\delta(n+r_1,n+r_2).$$
\end{Def}

The quantity $C_{\textbf{r}}$ measures in some sense how far a particular sequence is ``pseudorandom'' (see remark below). We allow $\textbf{r}$ to depend on $n$ in order to provide constructions of sequences that are robust. Note that this generalizes vastly the case when one fixes $\textbf{r}=(r_1,r_2)$ as a constant vector. Let us begin with a remark.

\begin{rem}\label{rem1} 
For a random sequence where every letter is picked independently with probability $1/k$ we have $C_{\textbf{r}}=1-1/k$ with probability $1$.
\end{rem}

The aim of this paper is to construct a large class of deterministic sequences over an alphabet that generalize the Rudin--Shapiro sequence. Let us begin by reminding its definition.

\begin{Def}[\cite{AlloucheAutomaticSequencesTheory2003} p.78] The Rudin--Shapiro (or Golay--Rudin--Shapiro) sequence $(a_n)_{n \geqslant 0} = 0,0,0,1,0,0,1,0, \ldots$ is defined for all $n \in \mathbb{N}$ by

\begin{center}
$a_n=$ (number of blocks ``11'' in the binary representation of $n$) mod 2.
\end{center}
\end{Def}

\begin{rem}[\cite{AlloucheAutomaticSequencesTheory2003} p.79] It is easy to prove the following equivalent definition: 

\begin{center}
$a_{2n}=a_n$ and $a_{2n+1}=\begin{cases}
(a_n+1) \ \text{mod} \ 2 &\text{if} \ n \equiv 1 \ (\text{mod} \ 2),\\
a_n &\text{if} \ n \equiv 0 \ (\text{mod} \ 2).
\end{cases}$
\end{center}

Thus, the Rudin--Shapiro sequence can be defined as follows:

$$a_0=0 \ \text{and} \ a_{2n+j}=(a_n+g(j,n)) \  \text{mod} \ 2$$ 
with $g(j,n)=\begin{cases}
1,&\text{if} \ j=1, \ n \equiv 1 \ (\text{mod} \ 2),\\
0,&\text{else.}
\end{cases}$
\end{rem}

From this observation, Grant, Shallit, and Stoll~\cite{grant_bounds_2009} suggested a definition of generalized Rudin--Shapiro sequences.

\begin{Def}\label{def3}
Let 
\begin{align*}
g : \left\{0,1,\ldots,k-1\right\} \times \mathbb{Z} &\longrightarrow \mathbb{Z}\\ 
(j,n) &\longmapsto g(j,n)
\end{align*} 
be such that for each $j$, the function $n \mapsto g(j,n)$ is periodic with period $k$. Moreover, let $g$ be such that for all integers $u,i \in \mathbb{N}$ with $0\leq u<u+i\leq k-1$ we have
\begin{center}
$\left\{(g(u+i,n)-g(u,n)) \ \text{mod} \ k: 0 \leq n \leq k-1 \right\}=\left\{0,1,\ldots,k-1\right\}$.
\end{center}

We call a sequence $(\hat{a}(n))_{n \geq 0}$ over the alphabet $\left\{0,1,\ldots,k-1\right\}$ a \emph{generalized Rudin--Shapiro sequence} if there exists a sequence of integers $(a(n))_{n \geq 0}$ such that $\hat{a}(n) \equiv a(n)$ mod $k$ and
\begin{center}
$a(nk+j)=a(n)+g(j,n) \quad \text{for} \ 0\leq j \leq k-1, \ n\geq 1$.
\end{center}
\end{Def}

\begin{rem}
In order to define completely the sequence, we can fix (arbitrarily) the first values $a(0),\ldots,a(k-1)$ and the others are obtained recursively by the last relation.
\end{rem}

\begin{rem}
Allouche and Bousquet-Mélou~\cite{AlloucheFacteurssuitesRudinShapiro1994} studied in detail a generalization of the Rudin--Shapiro sequence within the framework of binary alphabets and paperfolding sequences. Rider~\cite{RiderTransformationsFouriercoefficients} defined a first generalization of the Rudin--Shapiro sequence over alphabets such that the size is a prime number, and M.~Queffélec~\cite{Queffelec_suites_1987} extended the definition for alphabets of arbitrary size and studied its spectral measure. In the definition introduced by Grant et al., these sequences correspond to the special case when the size of the alphabet is a prime number and the function $g$ is defined by $g(j,n)=jn \ \text{mod} \ k$ (see Example~\ref{ex2}). Allouche and Liardet~\cite{allouche_generalized_1991} also extended Queffélec's construction and proved that their sequences, as the classical Rudin--Shapiro sequence, still have the Lebesgue measure as spectral measure. In this paper, we do not look at spectral measure properties, but only at properties about discrete correlation of order 2, taking up the same point of view as Grant et al.~\cite{grant_bounds_2009}. 
\end{rem}

The two main results of Grant, Shallit, and Stoll~\cite{grant_bounds_2009} are as follows.

\begin{theorem}[Theorem 3.1 of \cite{grant_bounds_2009}]\label{theo1} 
Let $(\hat{a}(n))_{n \geq 0}$ be a generalized Rudin--Shapiro sequence over $\left\{0,1,\ldots,k-1\right\}$ with $k$ prime. Moreover, let $0 \leq r_1 < r_2$. Then, as $N \rightarrow \infty$, we have

$$\displaystyle\sum_{n<N}\delta(n+r_1,n+r_2)=N\left(1-\frac{1}{k}\right)+O_k\left((r_2-r_1)\text{log}\frac{N}{r_2-r_1}+r_2\right).$$
\end{theorem}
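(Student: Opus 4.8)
The plan is to reduce the counting problem to an exponential sum via the standard orthogonality identity
\[
\delta(n+r_1,n+r_2)=1-\frac{1}{k}\sum_{t=0}^{k-1}\mathrm{e}\!\left(\frac{t(a(n+r_1)-a(n+r_2))}{k}\right),
\]
so that $\sum_{n<N}\delta(n+r_1,n+r_2)=N(1-1/k)+\frac1k\sum_{t=1}^{k-1}\big(\text{something}\big)+O(1)$, where the $t$-th inner sum is $S_t(N)=\sum_{n<N}\mathrm{e}\big(\tfrac{t}{k}(a(n+r_1)-a(n+r_2))\big)$. Everything then comes down to bounding $|S_t(N)|$ for each $t\in\{1,\dots,k-1\}$ by $O_k\big((r_2-r_1)\log\frac{N}{r_2-r_1}+r_2\big)$. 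Since $k$ is prime, $t$ is invertible mod $k$, so there is no loss in absorbing $t$ into the base-$k$ digit structure; I expect the argument to be essentially uniform in $t$.

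Next I would exploit the base-$k$ recursion $a(nk+j)=a(n)+g(j,n)$ to get a self-similar (transfer-matrix / renewal) structure for $S_t$. Writing $n$ in base $k$, the difference $a(n+r_1)-a(n+r_2)$ depends, through the digits of $n+r_1$ and $n+r_2$, on how the addition of $r_1$ and of $r_2$ propagates carries. The key point is that $n+r_1$ and $n+r_2$ differ only in digits up to roughly $\log_k(r_2)$ unless a long carry chain is triggered, and more importantly $n+r_1$ and $n+r_2$ agree in all sufficiently high digits while their low-order behaviour is governed by $n \bmod k^m$ for $m\approx \log_k(r_2-r_1)$. The defining property of $g$ — that $\{g(u+i,n)-g(u,n)\bmod k : 0\le n\le k-1\}=\{0,\dots,k-1\}$ — is exactly what makes the relevant one-step character sum $\sum_{n=0}^{k-1}\mathrm{e}(\tfrac tk(g(j_1,n)-g(j_2,n)))$ vanish whenever $j_1\ne j_2$: the inner exponent runs over a complete residue system, so the sum is $0$. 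This orthogonality collapses the full sum except on a ``boundary'' set of $n$ where the carry patterns of $n+r_1$ and $n+r_2$ line up the digits, i.e. where $j_1=j_2$ at the critical position.

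Concretely, I would split the sum over $n<N$ into blocks according to the residue of $n$ modulo $k^m$ with $m=\lceil \log_k(r_2-r_1)\rceil$ (so that $k^m\asymp r_2-r_1$): within each complete block the recursion plus the orthogonality property forces cancellation down one level, giving a recursion of the shape $|S_t(N)|\ll \tfrac{1}{k}|S_t'(N/k)| + (\text{boundary terms})$, where the boundary terms count the $n$ in incomplete blocks and the $n$ near carry transitions, which number $O(r_2)$ per level, and there are $O(\log(N/(r_2-r_1)))$ levels before the recursion bottoms out. Summing the geometric-type recursion yields exactly the claimed $O_k\big((r_2-r_1)\log\frac{N}{r_2-r_1}+r_2\big)$. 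I expect the main obstacle to be the careful bookkeeping of carry propagation when adding $r_1$ and $r_2$ in base $k$: one must show that, apart from $O(r_2)$ exceptional $n$ per scale, the digit of $n+r_1$ and of $n+r_2$ at the active position differ, so the complete-residue-system property kicks in; handling the interaction of the two shifts simultaneously (rather than a single shift) and making the error uniform in $r_1,r_2,t$ is the delicate part, and is presumably where the hypothesis $k$ prime is used in its cleanest form.
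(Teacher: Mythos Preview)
Your opening reduction via orthogonality is correct and matches the paper (which proves this statement as the $k=1$ specialization of its Theorem~\ref{theo4}). But the recursion you sketch has a structural defect and omits the device that makes the induction close.

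The shape $|S_t(N)|\ll\frac{1}{k}|S_t'(N/k)|+(\text{boundary})$ cannot be right: splitting $n<N$ into residue classes modulo $k$ yields $k$ sums of length $\approx N/k$, so the prefactor is $k$, not $1/k$; and the orthogonality $\sum_{m=0}^{k-1}\mathrm{e}\big(\tfrac{t}{k}(g(j_1,m)-g(j_2,m))\big)=0$ does not collapse a generic step of the recursion --- it only bites at the very bottom, once one of the two shifts has been reduced to $0$. More seriously, after one application of $a(kn'+u)=a(n')+g(u,n')$ the resulting sum over $n'$ carries an extra factor $\mathrm{e}\big(\tfrac{t}{k}(g(u+i,n'+M)-g(u,n'))\big)$ that depends on $n'\bmod k$, so it is no longer of the form $S_t$. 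The paper's remedy is to enlarge to the twisted family
\[
\gamma_N(r,f)=\sum_{n<N}\mathrm{e}\!\left(\tfrac{t}{k}\big(a(n+r)-a(n)\big)\right)\mathrm{e}\!\left(\tfrac{f(n)}{k}\right),\qquad f\ \text{periodic mod }k,
\]
which \emph{is} stable under the base-$k$ recursion; your unspecified $S_t'$ hides exactly this difficulty. The iteration then runs as follows: first prove the base case $|\gamma_N(1,f)|=O_k(\log N)$ by a self-recursion in $N$ alone (here the difference-set property gives $|\gamma_N(0,f_1)|=O_k(1)$ for the particular $f_1(n)=g(u+1,n)-g(u,n)$ that appears, killing all but one of the $k$ branches); then for general $r=kM+i$ use
\[
|\gamma_{kN+j}(kM+i,f)|\le (k-i)\,|\gamma_N(M,\tilde f_1)|+i\,|\gamma_N(M+1,\tilde f_2)|+O_k(1),
\]
iterated about $\log_k r$ times to bring $r$ down to the range $[1,k]$, accumulating a multiplicative factor $\asymp r$. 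The $\log(N/r)$ in the final bound comes from the base case $\gamma_{N/r}(1,\cdot)=O_k(\log(N/r))$, not from ``$O(\log(N/(r_2-r_1)))$ levels with $O(r_2)$ boundary terms per level'' as you describe.
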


We note that the main term lines up exactly with the probabilistic one.

With this result, one can also prove that the main term is asymptotically larger than the error term as long as $r_2=o(N)$ (Corollary 3.2 of~\cite{grant_bounds_2009}).

Now, using a bijection between $\mathbb{Z}_{p_1} \times \cdots \times \mathbb{Z}_{p_d}$ and $\mathbb{Z}_{p_1 \cdots p_d}$, it is possible to construct a sequence over an alphabet whose size is squarefree and obtain similar properties about the correlation of order 2 of the sequence.
 
\begin{theorem}[Theorem 3.3 of \cite{grant_bounds_2009}]\label{theo2}
Let $d \geq 2$ and let $k=p_1\cdots p_d$ be a product of pairwise distinct primes. Let $c_1=1$ and $c_i=p_1\cdots p_{i-1}$ for $2 \leq i \leq d$. We define the sequence $(\hat{a}(n))_{n \geq 0}$ by
\begin{center}
$\hat{a}(n) \equiv a(n) \ \text{mod} \ k$,
\end{center}
where $(a(n))_{n \geq 0}$ is defined by $a(n)=c_1a_1(n)+\cdots+c_d a_d(n)$ and $(a_i(n))_{n \geq 0}$ satisfies the recursive relation 

$$a_i(p_in+j)=a_i(n)+g_i(j,n), \quad 1 \leq i \leq d,$$
for $n \geq 1$ and $0 \leq j \leq p_i-1$ and where the $g_i$ are functions which satisfy the conditions of Definition~\ref{def3}. Moreover, let $0 \leq r_1 < r_2$ and $0 < \gamma < 1$. Then, as $N \rightarrow \infty$ we have,

\begin{align*}
&\displaystyle\sum_{n<N}\delta(n+r_1,n+r_2)\\
&=N\left(1-\frac{1}{k}\right)+O_k\left((r_2-r_1)N^{1-\frac{\gamma}{d}}+(r_2-r_1)N^{1-\gamma}\text{log}\frac{N^{\frac{\gamma}{d}}}{r_2-r_1}+N^{\gamma}+r_2\right).
\end{align*}
\end{theorem}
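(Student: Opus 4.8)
The plan is to convert the count into exponential sums over the nontrivial additive characters of $\mathbb{Z}_k$, and then to estimate each such sum by combining the digit--recursive structure of the component sequences $a_1,\dots,a_d$ with the difference-set property of the functions $g_i$ built into Definition~\ref{def3}. First I would linearise $\delta$: since $\hat a(m)=\hat a(m')$ if and only if $a(m)\equiv a(m')\pmod k$, orthogonality of the characters of $\mathbb{Z}_k$ gives
\[
\delta(m,m')=1-\frac1k\sum_{t=0}^{k-1}\text{e}\!\left(\frac{t\,(a(m)-a(m'))}{k}\right)
=\left(1-\frac1k\right)-\frac1k\sum_{t=1}^{k-1}\text{e}\!\left(\frac{t\,(a(m)-a(m'))}{k}\right).
\]
Putting $m=n+r_1$, $m'=n+r_2$ and summing over $n<N$ produces the main term $N(1-1/k)$ together with $-\frac1k\sum_{t=1}^{k-1}S_t$, where $S_t=\sum_{n<N}\text{e}\big(t\,(a(n+r_1)-a(n+r_2))/k\big)$. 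It therefore suffices to bound, for each fixed $t\in\{1,\dots,k-1\}$, the sum $S_t$ by the error term in the statement. Using $a=\sum_{i=1}^d c_i a_i$ and $c_i/k=1/(p_ip_{i+1}\cdots p_d)$, the summand of $S_t$ splits as a product $\prod_{i=1}^d \text{e}\big(t\,(a_i(n+r_1)-a_i(n+r_2))/(p_i\cdots p_d)\big)$, which reduces the problem to controlling the joint oscillation of the $d$ integers $a_i(n+r_1)-a_i(n+r_2)$.

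Next I would unfold each $a_i$ along its base-$p_i$ digits: iterating $a_i(p_in+j)=a_i(n)+g_i(j,n)$ shows that $a_i(n)$ equals one of the prescribed seed values plus a sum of terms $g_i(d,\cdot)$ indexed by the base-$p_i$ digit positions of $n$. Hence, for a given $n$, if $w_i=w_i(n)$ denotes the highest base-$p_i$ position affected when one replaces $r_1$ by $r_2$ (the top of the carry produced in base $p_i$), then above level $w_i$ the base-$p_i$ digits of $n+r_1$ and $n+r_2$ agree, their $g_i$-contributions cancel, and $a_i(n+r_1)-a_i(n+r_2)$ depends only on the digits of $n$ up to level $w_i$. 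I would then organise $\sum_{n<N}$ according to the vector $(w_1,\dots,w_d)$ of carry depths and, for each configuration, single out one index $i$ with $p_i\nmid t$ --- such $i$ exists for every $t\in\{1,\dots,k-1\}$ precisely because $k$ is squarefree --- and sum over the free base-$p_i$ digit of $n$ sitting just above level $w_i$: that digit runs through a complete residue system modulo $p_i$ without disturbing any carry, and the difference-set property of $g_i$ from Definition~\ref{def3} makes the resulting inner sum a complete nontrivial geometric sum, hence zero. So every ``interior'' configuration contributes nothing, and what remains is the contribution of the first $O(r_2)$ values of $n$ (where the seed values intervene) together with the ``boundary'' configurations, in which the free digit is pinned near the leading digit of $N$.

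Finally I would bound the boundary contribution. For a fixed base $p_i$, the number of $n<N$ whose carry reaches level $v$ is $\ll (r_2-r_1)N/p_i^{\,v}$, and among those only $O(\log N)$ leading positions can be obstructed by $n<N$; summing over $v$ produces a geometric series in the $p_i^{-v}$, which is the source of the logarithmic factor $\log\frac{(\cdot)}{r_2-r_1}$. The free parameter is the depth to which the digit windows are pushed; choosing it so that the window size in each base is $\asymp N^{\gamma/d}$ (i.e.\ cutting carry depths at level $\asymp\frac{\gamma}{d}\log_{p_i}N$ and treating the rare deeper carries trivially) balances the pieces into the term $(r_2-r_1)N^{1-\gamma/d}$ (the deep-carry tail, counted with weight $O(1)$), the term $(r_2-r_1)N^{1-\gamma}\log\frac{N^{\gamma/d}}{r_2-r_1}$ (the $\asymp N^{1-\gamma}$ windows, each contributing a single-prime--type error $\ll (r_2-r_1)\log\frac{N^{\gamma/d}}{r_2-r_1}$ in the spirit of Theorem~\ref{theo1}), the term $N^{\gamma}$ (one window's worth of trivial bound at the far end), and the term $r_2$ (the seed segment); summing over the $\le k-1$ values of $t$ only changes the implied constant. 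The main obstacle is the simultaneous multi-base carry bookkeeping: the $d$ base expansions are mutually incompatible, so one must make each digit window deep enough to absorb the carries generated by $r_1$ and $r_2$ in base $p_i$ while keeping the product of the window sizes small, track which boundary obstructions arise in which base, and check that for every nontrivial $t$ the cancellation genuinely takes place in the $d$-fold product --- this last point is exactly where squarefreeness of $k$ enters, and it is also why the error term is so much weaker than in the single-prime Theorem~\ref{theo1}.
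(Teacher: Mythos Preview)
First, note that this paper does not itself prove Theorem~\ref{theo2}: it is quoted from \cite{grant_bounds_2009} as state of the art, and the paper instead proves the related Theorem~\ref{theo5}. The natural benchmark for your proposal is therefore the proof of Theorem~\ref{theo5}, which follows the same scheme as the original argument in \cite{grant_bounds_2009}.

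Your opening steps (orthogonality, writing the character along $a=\sum_i c_ia_i$, unfolding each $a_i$ in base $p_i$ via Lemma~\ref{lem2}) are fine, but the central cancellation device you propose has a genuine gap. You want to fix one index $i$ with $p_i\nmid t$, isolate the ``free'' base-$p_i$ digit of $n$ just above the carry level $w_i$, and sum over it to get a complete nontrivial character sum. The problem is that the $d$ base expansions are coupled through the single integer $n$: shifting $n$ by a multiple of $p_i^{\,w_i+1}$ scrambles \emph{all} of its base-$p_j$ digits for $j\ne i$, so the remaining factors $\text{e}\big(t\,c_j(a_j(n+r_1)-a_j(n+r_2))/k\big)$ in your product are not constant along that sum, and the inner sum is not a geometric sum. (A secondary issue: since $c_i/k=1/(p_i\cdots p_d)$, your $i$-th factor is $\text{e}(\cdot/(p_i\cdots p_d))$, which is not a character of $\mathbb{Z}_{p_i}$; equidistribution of the difference modulo $p_i$ alone would not make it vanish even if the other factors were constant. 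In the paper the detection of $\hat a(m)=\hat a(m')$ is done directly via $\prod_i \tfrac{1}{p_i}\sum_{h_i}\text{e}(h_i\cdot/p_i)$, which avoids this.)

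The mechanism that actually decouples the bases, both in \cite{grant_bounds_2009} and in the proof of Theorem~\ref{theo5} here, is a Chinese-Remainder partition rather than a single-digit average: one fixes $s_i$ with $p_i^{s_i}\asymp N^{\gamma/d}$, splits $\{n<N\}$ into the classes $P_{\mathbf b}=\{n:\ n\equiv b_i\ (\mathrm{mod}\ p_i^{s_i})\ \text{for all }i\}$, and checks that on $P_{\mathbf b}$ (away from the thin set where some $b_i\ge p_i^{s_i}-r$) the differences $a_i(n+r)-a_i(n)$ depend only on $b_i$. The sum over $\mathbf b$ then \emph{genuinely} factors as $\prod_i\sum_{b_i<p_i^{s_i}}$, and one applies the single-prime bound of Theorem~\ref{theo1} to the factor with a nontrivial frequency, bounding the others trivially by $p_i^{s_i}\le N^{\gamma/d}$. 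The four error pieces $(r_2-r_1)N^{1-\gamma/d}$, $(r_2-r_1)N^{1-\gamma}\log\!\big(N^{\gamma/d}/(r_2-r_1)\big)$, $N^{\gamma}$ and $r_2$ then arise exactly as in your last paragraph, but from this factored sum and its boundary classes, not from a free-digit average. In short, your window bookkeeping and the role of $\gamma$ are right; it is the cancellation step that must be replaced by the CRT factorisation.
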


Similarly, with this result, one can also prove that the main term is asymptotically larger than the error term as long as $r_2=o(N^{\frac{1}{d}})$ (Corollary 3.4 of~\cite{grant_bounds_2009}).

\begin{rem}
The previous construction cannot be used for an alphabet whose size is not squarefree because the proof of Theorem~\ref{theo2} requires the result of Theorem~\ref{theo1} that is only valid for a prime number and not for a power of a prime number. To overcome this obstacle, we use new constructions obtained via difference matrices.
We develop this crucial point in the following section, in order to generalize these two results to an alphabet of arbitrary size.
\end{rem}

The rest of the paper is structured as follows. In Section~\ref{Section3} we introduce difference matrices and give several examples. In
Section~\ref{Section4} we present our two main results (Theorem~\ref{theo4} and Theorem~\ref{theo5}), in Section~\ref{Section5} we give their proofs and we end the paper with some open questions in Section~\ref{Section6}.

\section{Difference matrices}\label{Section3}

Difference matrices play a central role in our constructions to generalize the previous results. We refer to~\cite{hedayat_orthogonal_1999} and~\cite{lampio_classification_nodate} for an overview on difference matrices. We here give an introduction to the theory of this kind of matrices with some examples. We exchange the role of the rows and the columns in comparison with~\cite{hedayat_orthogonal_1999} and~\cite{lampio_classification_nodate}.

\begin{Def}[\cite{hedayat_orthogonal_1999,lampio_classification_nodate}] Let $(G,+)$ be a finite abelian group of order $s$. A \emph{difference matrix} $D=(d_{ij})$ of size $r\times c$ with entries in $G$, is a matrix such that for all $i$ and $j$ with $1\leq i,j \leq c$, $i \neq j$, the set
\begin{center}
$\left\{d_{li}-{d_{lj}} : 1\leq l \leq r\right\}$
\end{center}
contains every element of $G$ equally often.
\end{Def}

\begin{ex}
$\begin{pmatrix}
0 & 0 & 0 \\
0 & 1 & 2 \\
0 & 2 & 1
\end{pmatrix}$
is a difference matrix over $\mathbb{Z}_3$.
\end{ex}

We let $D(r,c,G)$ denote the set of all difference matrices of size $r \times c$ with entries in the group $G$.

\begin{ex}[Example 6.3 of \cite{hedayat_orthogonal_1999}]\label{ex2} Let $k$ be a prime number. Then, the square matrix $A=(a_{ij})$ of size $k \times k$ defined by $a_{ij}=ij$ mod $k$ for all $1 \leq i,j \leq k$ is a matrix in $D(k,k,\mathbb{Z}_k)$.
\end{ex}

This result ensures that it is possible to build explicitly an example of function $g$ in the sense of Definition~\ref{def3} when the size of the alphabet is a prime number. Every set $\left\{(g(u+i,n)-g(u,n)) \ \text{mod} \ k: 0 \leq n \leq k-1 \right\}$ with $u$ and $i$ integers such that $0 \leq u < u+i \leq k-1$ are equivalent to a difference between two distinct columns. Consequently, Theorem~\ref{theo1} concerns a non-empty class of generalized Rudin--Shapiro sequences (see also Example 1 of~\cite{grant_bounds_2009}).

Ge~\cite[Lemma 3.1]{ge_g4;1-difference_2005} showed by elementary means that for an even integer $k\geq 4$, the set $D(k,k,\mathbb{Z}_k)$ is empty. In particular, the set $D(4,4,\mathbb{Z}_4)$ is empty. In other words, there is no square difference matrix of size $4$ over $\mathbb{Z}_4$. However, the set $D(4,4,\mathbb{Z}_2 \times \mathbb{Z}_2)$ is non-empty. Indeed, it is easy to check that the matrix
\begin{equation}
\label{matrix}
M=\begin{pmatrix}
(0,0) & (0,0) & (0,0) & (0,0) \\
(0,0) & (0,1) & (1,0) & (1,1) \\
(0,0) & (1,0) & (1,1) & (0,1) \\
(0,0) & (1,1) & (0,1) & (1,0) 
\end{pmatrix}
\end{equation}
is an element of this set, see~\cite[p.22]{hedayat_orthogonal_1999}.

More generally, we have the following result. For the sake of completeness we give below an explicit proof.
\begin{prop}\label{MatDif}
(\cite{hedayat_orthogonal_1999} p.115) For any prime number $p$ and any integers $k$ and $n$ such that $k \geq n \geq 1$, there exists an abelian group $G$ with order of $G$ equal to $p^n$ such that the set $D(p^k,p^k,G)$ is non-empty.
\end{prop}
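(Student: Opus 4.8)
The plan is to exhibit the elementary abelian group $G=\mathbb{Z}_p^{\,n}=(\mathbb{Z}/p\mathbb{Z})^n$ (which has order $p^n$) and to build an explicit element of $D(p^k,p^k,\mathbb{Z}_p^{\,n})$ from the multiplication table of a finite field, cut down to the right group by a coordinate projection. The starting point, generalizing Example~\ref{ex2}, is that for any prime power $q$ the multiplication table of $\mathbb{F}_q$ is a difference matrix: fixing an enumeration $x_1,\dots,x_q$ of the elements of $\mathbb{F}_q$ and setting $a_{ij}=x_ix_j$, one has for any two columns $i\neq j$ and any row index $\ell$ that $a_{\ell i}-a_{\ell j}=x_\ell(x_i-x_j)$; since $x_i-x_j\neq 0$, multiplication by $x_i-x_j$ is a bijection of $\mathbb{F}_q$, so as $\ell$ runs over $1,\dots,q$ the differences run over every element of $(\mathbb{F}_q,+)$ exactly once. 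Hence $(a_{ij})\in D(q,q,(\mathbb{F}_q,+))$. Applying this with $q=p^k$ and using the additive group isomorphism $(\mathbb{F}_{p^k},+)\cong\mathbb{Z}_p^{\,k}$ produces a matrix $A\in D(p^k,p^k,\mathbb{Z}_p^{\,k})$.

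Next I would record the elementary fact that difference matrices push forward along surjective group homomorphisms: if $D=(d_{ij})\in D(r,c,G)$ and $\varphi\colon G\to H$ is a surjective homomorphism of finite abelian groups, then $\varphi(D):=(\varphi(d_{ij}))\in D(r,c,H)$. Indeed, if for columns $i\neq j$ the multiset $\{d_{\ell i}-d_{\ell j}:1\leq\ell\leq r\}$ contains each element of $G$ exactly $\lambda=r/|G|$ times, then each $h\in H$ has exactly $|G|/|H|$ preimages under $\varphi$, so $h$ appears exactly $\lambda\,|G|/|H|=r/|H|$ times in $\{\varphi(d_{\ell i})-\varphi(d_{\ell j}):1\leq\ell\leq r\}$, and this count is the same for every $h\in H$.

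Finally, since $k\geq n$, the projection $\varphi\colon\mathbb{Z}_p^{\,k}\to\mathbb{Z}_p^{\,n}$ onto the first $n$ coordinates is a surjective homomorphism, so $\varphi(A)\in D(p^k,p^k,\mathbb{Z}_p^{\,n})$; taking $G=\mathbb{Z}_p^{\,n}$ completes the proof.

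I do not expect a genuine obstacle here, as the construction is classical; the only points requiring a little care are the verification that multiplication by a nonzero field element is a bijection (so the field table really is a difference matrix, with $\lambda=1$), and the bookkeeping in the push-forward step, where one must check that the multiplicity $r/|H|$ does not depend on the target element. It is also worth remarking why the choice of group matters: one cannot in general take $G$ cyclic, since $D(4,4,\mathbb{Z}_4)=\emptyset$ by Ge's result quoted above, whereas the elementary abelian group $\mathbb{Z}_p^{\,n}$ always works.
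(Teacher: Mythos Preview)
Your proposal is correct and follows essentially the same approach as the paper: both start from the multiplication table of $\mathbb{F}_{p^k}$ (which is a $D(p^k,p^k,(\mathbb{F}_{p^k},+))$ matrix since multiplication by a nonzero element is a bijection) and then push forward along the surjective additive homomorphism given by truncation/projection onto the first $n$ coordinates, landing in the elementary abelian group of order $p^n$. The only cosmetic differences are that the paper writes the target as the additive group of $\mathbb{F}_{p^n}$ rather than $\mathbb{Z}_p^{\,n}$ and verifies the push-forward property inline for this specific map, whereas you isolate it as a general lemma about surjective homomorphisms.
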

\begin{proof}

Let $\mathbb{F}_{p^k}$ be the finite field with $p^k$ elements. Let the elements be represented by polynomials

$$\beta_0+\beta_1x+\cdots+\beta_{n-1}x^{n-1}+\cdots+\beta_{k-1}x^{k-1}$$
where $\beta_0,\ldots,\beta_{k-1} \in \mathbb{Z}_p$.

We may regard the finite field $\mathbb{F}_{p^n}$ as an additive subgroup of $\mathbb{F}_{p^k}$ by identifying its elements with polynomials of the form $\beta_0+\beta_1x+\cdots+\beta_{n-1}x^{n-1}$. (The multiplication of elements in $\mathbb{F}_{p^n}$ is in general different from the one in $\mathbb{F}_{p^k}$ but it is not a problem here, because we will only use the additive structure of $\mathbb{F}_{p^n}$).

Let $D^*$ be the multiplication table of $\mathbb{F}_{p^k}$ and let $\phi : \mathbb{F}_{p^k} \rightarrow \mathbb{F}_{p^n}$ be the map which maps the element $\beta_0+\beta_1x+\cdots+\beta_{k-1}x^{k-1}$ to the element $\beta_0+\beta_1x+\cdots+\beta_{n-1}x^{n-1}$.

We apply $\phi$ to each element of the table $D^*$ and we let $D$ denote the new table obtained in this way. Then $D$ is a difference matrix of $D(p^k,p^k,\mathbb{F}_{p^n})$.

Indeed, by construction, $D$ is a matrix of size $p^k \times p^k$ with entries in $\mathbb{F}_{p^n}$.

Let $\alpha_0,\ldots,\alpha_{p^k-1}$ be the elements of $\mathbb{F}_{p^k}$. Then, the difference of two columns of $D$ will have the form

\begin{center}
$\begin{pmatrix}
\phi(\beta\alpha_0) \\
\vdots \\
\phi(\beta\alpha_{p^k-1})
\end{pmatrix}
-
\begin{pmatrix}
\phi(\gamma\alpha_0) \\
\vdots \\
\phi(\gamma\alpha_{p^k-1})
\end{pmatrix}$
\end{center}
where $\beta,\gamma \in \mathbb{F}_{p^k},\beta \neq \gamma$.

Moreover, by definition of $\phi$ we have $\phi(\beta\alpha_i)-\phi(\gamma\alpha_i)=\phi(\beta\alpha_i-\gamma\alpha_i)$. The difference of two columns is equal to

\begin{center}
$\begin{pmatrix}
\phi((\beta-\gamma)\alpha_0) \\
\vdots \\
\phi((\beta-\gamma)\alpha_{p^k-1})
\end{pmatrix}$.
\end{center}

As each element of $\mathbb{F}_{p^k}$ appears once among the elements $(\beta-\gamma)\alpha_i, \ 0 \leq i < p^k$, every element of $\mathbb{F}_{p^k}$ appears $p^{k-n}$ times among the elements $\phi((\beta-\gamma)\alpha_i), \ 0 \leq i < p^k$.
\end{proof}

\begin{ex}
From the table of the finite field $\mathbb{F}_8 \simeq \mathbb{F}_2[X] \slash (X^3+X+1)$, we obtain the following matrix of $D(8,8,\mathbb{Z}^3_2)$:
$$\begin{pmatrix}
(0,0,0) & (0,0,0) & (0,0,0) & (0,0,0) & (0,0,0) & (0,0,0) & (0,0,0) & (0,0,0) \\
(0,0,0) & (0,0,1) & (0,1,0) & (0,1,1) & (1,0,0) & (1,0,1) & (1,1,0) & (1,1,1) \\
(0,0,0) & (0,1,0) & (1,0,0) & (1,1,0) & (0,1,1) & (0,0,1) & (1,1,1) & (1,0,1) \\
(0,0,0) & (0,1,1) & (1,1,0) & (1,0,1) & (1,1,1) & (1,0,0) & (0,0,1) & (0,1,0) \\
(0,0,0) & (1,0,0) & (0,1,1) & (1,1,1) & (1,1,0) & (0,1,0) & (1,0,1) & (0,0,1) \\
(0,0,0) & (1,0,1) & (0,0,1) & (1,0,0) & (0,1,0) & (1,1,1) & (0,1,1) & (1,1,0) \\
(0,0,0) & (1,1,0) & (1,1,1) & (0,0,1) & (1,0,1) & (0,1,1) & (0,1,0) & (1,0,0)\\
(0,0,0) & (1,1,1) & (1,0,1) & (0,1,0) & (0,0,1) & (1,1,0) & (1,0,0) & (0,1,1)\\
\end{pmatrix}$$
\end{ex}

\begin{ex}\label{ex4}
Hedayat, Sloane, and Stufken~\cite[p.117]{hedayat_orthogonal_1999} give an example of a matrix in $D(9,9,\mathbb{Z}_3)$ from the table of the finite field $\mathbb{F}_9 \simeq \mathbb{F}_3[X] \slash (X^2+1)$ with $9$ elements:

$$\begin{pmatrix}
0 & 0 & 0 & 0 & 0 & 0 & 0 & 0 & 0 \\
0 & 1 & 2 & 0 & 1 & 2 & 0 & 1 & 2 \\
0 & 2 & 1 & 0 & 2 & 1 & 0 & 2 & 1 \\
0 & 0 & 0 & 2 & 2 & 2 & 1 & 1 & 1 \\
0 & 1 & 2 & 2 & 0 & 1 & 1 & 2 & 0 \\
0 & 2 & 1 & 2 & 1 & 0 & 1 & 0 & 2 \\
0 & 0 & 0 & 1 & 1 & 1 & 2 & 2 & 2 \\
0 & 1 & 2 & 1 & 2 & 0 & 2 & 0 & 1 \\
0 & 2 & 1 & 1 & 0 & 2 & 2 & 1 & 0
\end{pmatrix}$$
\end{ex}

The existence of difference matrices has been extensively studied. Proposition~\ref{MatDif} gives a method for building explicitly a difference matrix with given parameters. However, not all difference matrices are obtained in this way.

Lampio and Östergård~\cite{lampio_classification_nodate,LampioClassificationdifferencematrices2011} propose a classification of difference matrices. It is based on an equivalence relation in the set of all difference matrices, defined by the following operations that generate a difference matrix with the same parameters (the numbers of rows, the numbers of columns, and the underlying group).

\begin{enumerate}
\item Permuting the order of rows.
\item Permuting the order of columns.
\item Adding a fixed element of the group $G$ to a row.
\item Adding a fixed element of the group $G$ to a column.
\item Applying an automorphism of the group $G$ to every element in the difference matrix.
\end{enumerate}

\begin{Def}[\cite{LampioClassificationdifferencematrices2011}]
We say that two difference matrices $A$ and $B$ are equivalent, denoted by $A \cong B$, if they have the same parameters and $B$ can be generated from $A$ by applying Operations 1-5 a finite number of times.
\end{Def}

The relation $\cong$ is an equivalence relation in the set of all difference matrices, and each equivalence class is a subset of the set of difference matrices with the same parameters.

\begin{Def}[\cite{LampioClassificationdifferencematrices2011}]
Let $G$ be an abelian group with some total order $\leq_G$ on the elements, where the identity element of $G$ is the minimal element. A difference matrix of $D(r,c,G)$ is an \textit{order-normalized difference matrix} if

\begin{enumerate}
\item the first row contains only the identity element,
\item the first column contains only the identity element,
\item the rows are in ascending lexicographic order from top to bottom (imposed by $\leq_G$ on row vectors), and
\item the columns are in ascending lexicographic order from left to right (imposed by $\leq_G$ on column vectors).
\end{enumerate}
\end{Def}

\begin{theorem}[\cite{LampioClassificationdifferencematrices2011}]
Every difference matrix of $D(r,c,G)$ is equivalent to an order-normalized difference matrix of $D(r,c,G)$.
\end{theorem}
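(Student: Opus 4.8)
The plan is to normalize an arbitrary $D\in D(r,c,G)$ in two stages: first make the border (top row and left column) all-identity, then fix the internal order by a global extremality argument. For the first stage I would use Operation~3 to add $-d_{l1}$ to each row $l$, turning the first column into the all-identity column, and then Operation~4 to add to each column its current negated top entry, turning the first row into the all-identity row; since this second batch of moves never touches the first column, the resulting matrix satisfies conditions~(1) and~(2) in the definition of an order-normalized matrix. Every intermediate matrix stays in $D(r,c,G)$ and in the equivalence class of $D$, because Operations~1--5 are exactly the moves defining $\cong$.

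The naive continuation — sort the rows, then sort the columns — fails, because permuting columns scrambles the row vectors and vice versa, so the rows need not stay sorted; I expect this to be the main obstacle, and I would bypass it with a single global choice rather than a local sorting loop. Let $\mathcal{S}$ be the set of all matrices equivalent to $D$ whose first row and first column are both the all-identity vector. By the first stage $\mathcal{S}$ is non-empty, and it is finite since its members are matrices of fixed shape over the fixed group $G$. Reading each matrix as the word obtained by concatenating its rows, let $D^{*}$ be a member of $\mathcal{S}$ that is smallest in the induced lexicographic order (coming from $\leq_G$ on entries).

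It then remains to verify that $D^{*}$ is order-normalized. Conditions~(1) and~(2) hold by membership in $\mathcal{S}$. For~(3), suppose row $l$ is lexicographically larger than row $l+1$; then $l\geq 2$, because the all-identity first row is lexicographically $\leq$ every row and so cannot be strictly larger than row~$2$. Swapping rows $l$ and $l+1$ (Operation~1) keeps us in $\mathcal{S}$ (the first row is untouched, and the first column merely has two identity entries transposed) and strictly decreases the associated word, contradicting minimality. For~(4), suppose column $j$ is lexicographically larger than column $j+1$; again $j\geq 2$. Let $l$ be the first row in which these two columns differ; then $l\geq 2$ since the all-identity first row forces agreement there, and the entry in position $(l,j)$ exceeds that in position $(l,j+1)$. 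Swapping columns $j$ and $j+1$ (Operation~2) keeps us in $\mathcal{S}$, leaves rows $1,\dots,l-1$ unchanged (they agree in columns $j$ and $j+1$), and lowers the entry in position $(l,j)$, hence again strictly decreases the word — a contradiction. So $D^{*}$ is an order-normalized difference matrix of $D(r,c,G)$ equivalent to $D$.

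The two points I would state carefully rather than treat as routine are that Operations~3 then~4, applied in this order, really do fix the entire border simultaneously, and that each swap in the extremal argument produces a strictly smaller word while remaining in $\mathcal{S}$ (for the column swap this rests precisely on the rows above the first disagreement being untouched). Everything else is bookkeeping with the already-granted facts that Operations~1--5 preserve both $D(r,c,G)$ and the equivalence relation $\cong$.
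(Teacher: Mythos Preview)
Your proof is correct and follows the same strategy as the paper, whose entire proof is the single sentence that one ``uses Operations 1,\,2,\,3 and 4 \ldots\ to build an order-normalized difference matrix from a given difference matrix of $D(r,c,G)$.'' Your extremality argument for simultaneously achieving the row and column orderings is a genuine detail that the paper leaves unstated, but the overall approach is the same.
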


The proof consists in using Operations 1,2,3 and 4 that define the equivalence relation in order to build an order-normalized difference matrix from a given difference matrix of $D(r,c,G)$.

\begin{rem}
This result implies that it suffices to study only order-normalized difference matrices to investigate the existence of a difference matrix with given parameters.
\end{rem}

\begin{rem}
The proof of Proposition~\ref{MatDif} gives a construction of difference matrices which already meet conditions 1 and 2 in the definition of order-normalized difference matrices. Then, by permuting rows and columns we can obtain the order-normalized difference matrices that are in the same equivalence class.
\end{rem}

Table 2 of~\cite{LampioClassificationdifferencematrices2011} gives the number of equivalence classes of difference matrices according to the parameters.

\begin{ex}\label{ex3} In $D(9,9,\mathbb{Z}_3)$, there are two equivalence classes of difference matrices. A representative of each equivalence class is given in~\cite{LampioClassificationdifferencematrices2011}:
$$\begin{pmatrix}
0 & 0 & 0 & 0 & 0 & 0 & 0 & 0 & 0 \\
0 & 0 & 0 & 1 & 1 & 1 & 2 & 2 & 2 \\
0 & 0 & 0 & 2 & 2 & 2 & 1 & 1 & 1 \\
0 & 1 & 2 & 0 & 1 & 2 & 0 & 1 & 2 \\
0 & 1 & 2 & 1 & 2 & 0 & 2 & 0 & 1 \\
0 & 1 & 2 & 2 & 0 & 1 & 1 & 2 & 0 \\
0 & 2 & 1 & 0 & 2 & 1 & 0 & 2 & 1 \\
0 & 2 & 1 & 1 & 0 & 2 & 2 & 1 & 0 \\
0 & 2 & 1 & 2 & 1 & 0 & 1 & 0 & 2
\end{pmatrix}
\qquad \qquad
\begin{pmatrix}
0 & 0 & 0 & 0 & 0 & 0 & 0 & 0 & 0 \\
0 & 0 & 0 & 1 & 1 & 1 & 2 & 2 & 2 \\
0 & 0 & 0 & 2 & 2 & 2 & 1 & 1 & 1 \\
0 & 1 & 2 & 0 & 1 & 2 & 0 & 1 & 2 \\
0 & 1 & 2 & 1 & 2 & 0 & 2 & 0 & 1 \\
0 & 1 & 2 & 2 & 0 & 1 & 1 & 2 & 0 \\
0 & 2 & 1 & 0 & 2 & 1 & 1 & 0 & 2 \\
0 & 2 & 1 & 1 & 0 & 2 & 0 & 2 & 1 \\
0 & 2 & 1 & 2 & 1 & 0 & 2 & 1 & 0
\end{pmatrix}$$
\end{ex}

\begin{rem}
By permuting the rows, the matrix obtained in Example~\ref{ex4} is equivalent to the order-normalized difference matrix on the left in Example~\ref{ex3}. Therefore, the matrix of the second equivalence class is necessarily obtained otherwise.
\end{rem}

We are now ready to define a generalization of the Rudin--Shapiro sequence via Proposition~\ref{MatDif}. It is an extension of the generalization in Definition~\ref{def3} for powers of prime numbers. 

\begin{Def}\label{def7}
Let $p$ be a prime number, let $k \geq 1$ and let $M=(m_{ij})_{\substack{0 \leq i < p^k \\ 0 \leq j < p^k}}$ be a difference matrix of $D(p^k,p^k,\mathbb{Z}^k_p)$. Let

\begin{align*}
g : \mathbb{Z} \times \mathbb{Z} &\longrightarrow \mathbb{Z}^k_p\\
(j,n) &\longmapsto m_{n \ \text{mod} \ p^k, \ j \ \text{mod} \ p^k}
\end{align*}

We let $g_1,\ldots,g_k$ denote the functions with values in $\mathbb{Z}_p$ such that
\\ $g(j,n)=(g_1(j,n),\ldots,g_k(j,n))$.

We say that the sequence defined by $(a(n))_{n \geq 0}=(a_1(n),\ldots,a_k(n))_{n \geq 0}$ and
$$a(p^k n+j)=a(n)+g(j,n), \quad 0 \leq j \leq p^k-1, \quad n \geq 0,  \quad (j,n)\neq(0,0)$$
is the \emph{Rudin--Shapiro sequence associated to the matrix $M$}.
\end{Def}

\begin{rem}
We can fix arbitrarily the value of $a(0)$ and the other terms are defined recursively.
\end{rem}

\begin{rem}
When the size of the alphabet is $p$, with $p$ a prime, Definition~\ref{def3} and Definition~\ref{def7} coincide, except possibly for the $p$ first values of the sequence.
\end{rem}

\begin{rem}
By definition of $g$, for all integers $u$ and $i$ with $0 \leq u < u+i \leq p^k-1$ the set $\left\{(g(u+i,n)-g(u,n)): 0 \leq n \leq p^k-1\right\}$ is equal to the set of the elements of $\mathbb{Z}^k_p$.
\end{rem}

\section{Main results}\label{Section4}

We have already seen results about the correlation of order 2 in the case where the size of the alphabet is a prime number or a squarefree product of prime numbers (Theorem~\ref{theo1} and Theorem~\ref{theo2}). In this part, we give a similar result for an alphabet of any size. First, we give a result for the alphabets whose size is a power of a prime number. The proof follows the lines of Theorem~\ref{theo1}, we give the full details in Section~\ref{Section5} for a better understanding and in order that the paper is self-contained.

\begin{theorem}\label{theo4}
Let $p$ be a prime number and $k \geq 1$. Let $M$ be a difference matrix in $D(p^k,p^k,\mathbb{Z}^k_p)$ and let $(a(n))_{n \geq 0}$ be the Rudin--Shapiro sequence associated to $M$.  Moreover, let $0 \leq r_1 < r_2$. Then, as $N \rightarrow \infty$, we have

$$\displaystyle\sum_{n<N}\delta(n+r_1,n+r_2)=N\left(1-\cfrac{1}{p^k}\right)+O_{p,k}\left((r_2-r_1)\text{log}\frac{N}{r_2-r_1}+r_2\right).$$

\end{theorem}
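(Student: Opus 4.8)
The plan is to follow the proof of Theorem~\ref{theo1} step by step, the only structural novelty being that the geometric‑sum cancellation available when the alphabet is a prime field is replaced by the defining property of the difference matrix $M$.

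\emph{Reduction to exponential sums.} View the alphabet as the additive group $\mathbb Z_p^k$, of order $p^k$, and use orthogonality of its characters: for $u,v\in\mathbb Z_p^k$ one has $\mathbf 1_{\{u=v\}}=p^{-k}\sum_\chi\chi(u-v)$, the sum over all $p^k$ characters. Since $\delta(n+r_1,n+r_2)=1-\mathbf 1_{\{a(n+r_1)=a(n+r_2)\}}$, summing over $n<N$ and isolating the trivial character gives
$$\sum_{n<N}\delta(n+r_1,n+r_2)=N\Bigl(1-\frac1{p^k}\Bigr)-\frac1{p^k}\sum_{\chi\neq\chi_0}\ \sum_{n<N}\chi\bigl(a(n+r_1)-a(n+r_2)\bigr).$$
There are $p^k-1=O_{p,k}(1)$ nontrivial characters, each of the form $\chi(x)=\text{e}(\langle t,x\rangle/p)$ with $t\in\mathbb Z_p^k\setminus\{0\}$; so it suffices to prove that for every such $\chi$,
$$S_\chi(N):=\sum_{n<N}\chi\bigl(a(n+r_1)-a(n+r_2)\bigr)=O_{p,k}\Bigl((r_2-r_1)\log\tfrac{N}{r_2-r_1}+r_2\Bigr).$$

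\emph{Unfolding the recursion and the key cancellation.} Put $Q=p^k$, $r=r_2-r_1$, and let $w$ be the number of base‑$Q$ digits of $r$, so $Q^{w-1}\le r<Q^w$. Iterating $a(Qn+j)=a(n)+m_{n\bmod Q,\,j}$ from Definition~\ref{def7} gives, for any integer $x$ with base‑$Q$ digits $x_0,x_1,\dots$ and any level $s$, the identity $a(x)=a(\lfloor x/Q^s\rfloor)+\sum_{i=0}^{s-1}m_{x_{i+1},\,x_i}$. Applying this at a level $s$ with $Q^s$ comparable to $N$ to $x=n+r_1$ and to $y=n+r_2=(n+r_1)+r$ turns $a(n+r_1)-a(n+r_2)$ into a telescoping sum of terms $m_{x_{i+1},x_i}-m_{y_{i+1},y_i}$; all positions $i$ lying beyond the reach of the carries produced by adding $r$ contribute $0$, so for a "generic" $n$ (no carry out of position $w-1$) only positions $0,\dots,w-1$ survive and the digit $x_w$ occurs in a single term, of the form $m_{x_w,x_{w-1}}-m_{x_w,y_{w-1}}$. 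Now sum over that digit. Because $M\in D(Q,Q,\mathbb Z_p^k)$ and $|\mathbb Z_p^k|=Q$, for any two distinct column indices $i\neq j$ one has
$$\sum_{l=0}^{Q-1}\chi\bigl(m_{l,i}-m_{l,j}\bigr)=\sum_{g\in\mathbb Z_p^k}\chi(g)=0$$
for every nontrivial $\chi$ — exactly the analogue, valid for an arbitrary difference matrix, of the identity $\sum_l\text{e}(tl(i-j)/k)=0$ used in~\cite{grant_bounds_2009} when $k$ is prime. Using $Q^{w-1}\le r<Q^w$ one checks that $x_{w-1}\neq y_{w-1}$ for all generic $n$, so their contribution to $S_\chi(N)$ vanishes.

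\emph{Bounding the exceptional terms.} What remains are the $n$ for which adding $r$ produces a carry out of position $w-1$. Such a carry propagates upward some number $\ell$ of positions (at most $s-w+1=O(\log(N/r))$ of them, since it cannot pass position $s-1$), and after it stops there is again a single surviving term of the summable shape $m_{l,i}-m_{l,i+1}$ in the first position strictly above the carry; summing over that digit kills the contribution except when the range $[r_1,N+r_1)$ truncates the sum, which happens for only $O_{p,k}(r)$ digit‑configurations at each of the $O(\log(N/r))$ scales. Collecting these, together with the $O_{p,k}(r)$ configurations with a maximal carry chain and the $O(r_2)$ slack coming from the alignment of the summation range (and, in the setting of Theorem~\ref{theo1}, from the base case of the recursion), gives $|S_\chi(N)|\ll_{p,k}(r_2-r_1)\log\frac N{r_2-r_1}+r_2$. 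When $N\ll_{p,k}(r_2-r_1)\log\frac N{r_2-r_1}+r_2$ the conclusion is trivial, which disposes of the degenerate ranges (e.g.\ $p^k\mid r$ with $r$ large). Summing over the $p^k-1$ nontrivial characters and dividing by $p^k$ yields the theorem.

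\emph{Main obstacle.} The delicate part is the carry bookkeeping: since adding $r$ in base $p^k$ generates carries, a fixed digit‑block does not break into a clean sum to which the difference‑matrix identity applies verbatim. One has to organise the summation so that the identity is applied at the correct position (one step above the carry, where the two matrix entries share a common row index and differ only in a column), and to show that all digit‑configurations escaping cancellation — carry chains of every length, the two ends of $[0,N)$, and the base case — contribute at most $O_{p,k}\bigl((r_2-r_1)\log\frac N{r_2-r_1}+r_2\bigr)$ in total. This is precisely where the argument of~\cite{grant_bounds_2009} is adapted, with the difference‑matrix property of $M$ playing the role previously played by the primality of $k$.
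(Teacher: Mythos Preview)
Your reduction to character sums over $\mathbb Z_p^k$ coincides with the paper's opening step, but from there the two arguments diverge. The paper first invokes Lemma~\ref{lem1}: for each nontrivial $(h_1,\dots,h_k)$ the linear combination $h_1G_1+\cdots+h_kG_k$ is itself a difference matrix in $D(p^k,p^k,\mathbb Z_p)$, so one may replace the $\mathbb Z_p^k$-valued sequence by the single $\mathbb Z_p$-valued sequence $b(n)=\sum_i h_ia_i(n)$ satisfying $b(p^kn+j)=b(n)+g^*(j,n)$; it then introduces the twisted sums $\gamma_N(r,f)$ with an auxiliary periodic $f$ and bounds them by a one-digit recursion (inequalities~\eqref{maj3} and~\eqref{maj5}), first establishing $|\gamma_N(1,f)|=O(\log N)$ and then iterating down in $r$. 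You instead bypass Lemma~\ref{lem1}, work directly in $\mathbb Z_p^k$, expand $a(\cdot)$ completely via the base-$Q$ digit identity (this is Lemma~\ref{lem2}, which the paper reserves for Theorem~\ref{theo5}), and classify $n$ by the length of the carry chain produced by adding $r$; the cancellation comes from summing over the first free digit above the carry and applying the difference-matrix identity in $\mathbb Z_p^k$. Both routes are sound and yield the same error term. The paper's recursion produces fully explicit constants and avoids any case analysis on carry lengths, at the price of carrying the twist $f$ through the induction; your combinatorial approach is more direct and never leaves $\mathbb Z_p^k$, but the carry bookkeeping you sketch (the $O_{p,k}(r)$ truncated configurations at each of the $O(\log(N/r))$ scales, plus the boundary of $[r_1,N+r_1)$) would need to be written out carefully to make the argument complete. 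Note also that despite your opening sentence, what you actually do is \emph{not} the step-by-step structure of the proof of Theorem~\ref{theo1}; it is closer in spirit to the digit-expansion method the paper uses for Theorem~\ref{theo5}.
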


\begin{ex}\label{ex6}
Let $(\tilde{a}(n))_{n \geq 0}$ be the sequence obtained from the generalized Rudin--Shapiro sequence $(a(n))_{n \geq 0}$ associated to the matrix~\eqref{matrix} over $D(4,4,\mathbb{Z}_2 \times \mathbb{Z}_2)$ by recoding $(0,0)$ to $0$, $(0,1)$ to $1$, $(1,0)$ to $2$ and $(1,1)$ to $3$. So, $(\tilde{a}(n))_{n \geq 0}$ is a sequence over the alphabet $\left\{0,1,2,3\right\}$, whose first terms are given below.
$$(\tilde{a}(n))_{n \geq 0}=0,0,0,0,0,1,2,3,0,2,3,1,0,3,1,2,0,0,0,0,1,0,3,2,2,0,1,3,\ldots$$  
Moreover, let $0 \leq r_1 < r_2$. Then, as $N \rightarrow \infty$, we have

$$\displaystyle\sum_{n<N}\delta(n+r_1,n+r_2)=\frac{3}{4}N+O\left((r_2-r_1)\text{log}\frac{N}{r_2-r_1}+r_2\right).$$
\end{ex}

\begin{rem}
It is possible to use a similar recoding for any choice of $p^k$.
\end{rem}

We have also the following corollary.

\begin{cor}
In the setting of Theorem~\ref{theo4}, if $r_2=o(N)$ then

$$\displaystyle\sum_{n<N}\delta(n+r_1,n+r_2)\sim N\left(1-\cfrac{1}{p^k}\right).$$

Consequently, in Example~\ref{ex6}, for $r_2=o(N)$, we have the same result as Grant et al. for an alphabet of size 4,

$$\displaystyle\sum_{n<N}\delta(n+r_1,n+r_2)\sim \frac{3}{4}N.$$
\end{cor}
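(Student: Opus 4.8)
The plan is to deduce the statement directly from Theorem~\ref{theo4}: once we know the precise asymptotic expansion with its error term, the only thing left is to check that under the hypothesis $r_2 = o(N)$ the error term is $o(N)$, and then divide by $N\bigl(1-\tfrac{1}{p^k}\bigr)$. So first I would invoke Theorem~\ref{theo4} to write
$$\sum_{n<N}\delta(n+r_1,n+r_2) = N\left(1-\frac{1}{p^k}\right) + O_{p,k}\!\left((r_2-r_1)\log\frac{N}{r_2-r_1} + r_2\right),$$
and observe that since $p$ and $k$ are fixed, the implied constant is absolute and irrelevant to establishing an $o(N)$ bound. It thus suffices to show each of the two contributions inside $O_{p,k}(\cdot)$ is $o(N)$.

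The term $r_2$ is $o(N)$ by hypothesis. For the term $(r_2-r_1)\log\frac{N}{r_2-r_1}$, I would set $t = r_2 - r_1$, so that $1 \le t \le r_2 = o(N)$, i.e. $t/N \to 0$. Introducing $h(u) = u\log(1/u)$ for $u \in (0,1]$ and $h(0)=0$, which is continuous on $[0,1]$, one has
$$\frac{1}{N}(r_2-r_1)\log\frac{N}{r_2-r_1} = h\!\left(\frac{t}{N}\right) \longrightarrow h(0) = 0,$$
so this term is $o(N)$ as well. (Equivalently, $x\mapsto x\log(N/x)$ is increasing on $(0,N/e]$, hence for $N$ large it is at most $r_2\log(N/r_2)=o(N)$.) Therefore the whole error term is $o(N)$, which gives $\sum_{n<N}\delta(n+r_1,n+r_2) = N(1-1/p^k) + o(N)$; since $p^k\ge 2$ the coefficient $1-1/p^k$ is a nonzero constant, so dividing through yields $\sum_{n<N}\delta(n+r_1,n+r_2)\sim N(1-1/p^k)$.

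For the concluding assertion about Example~\ref{ex6}, I would simply specialize: there $p=2$ and $k=2$, so $p^k=4$ and $1-1/p^k = 3/4$, giving $\sum_{n<N}\delta(n+r_1,n+r_2)\sim \tfrac34 N$. I do not expect any genuine obstacle here; the one nontrivial point is the elementary fact that $x\log(N/x) = o(N)$ whenever $x = o(N)$, which is exactly the continuity (or monotonicity) observation above, and this is the same mechanism used to derive Corollaries 3.2 and 3.4 of~\cite{grant_bounds_2009} from their main theorems.
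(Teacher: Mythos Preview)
Your proof is correct and follows exactly the natural route: the paper does not give a separate proof of this corollary, treating it as an immediate consequence of Theorem~\ref{theo4} (just as Corollary~3.2 of~\cite{grant_bounds_2009} follows from Theorem~\ref{theo1}), and your argument is precisely the expected one-line verification that the error term $(r_2-r_1)\log\frac{N}{r_2-r_1}+r_2$ is $o(N)$ once $r_2=o(N)$.
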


Now, we present the general case for any alphabet. 

\begin{theorem}\label{theo5}
Let $d \geq 2$, and let $p_1,\ldots,p_d$ be pairwise distinct primes and $k_1,\ldots,k_d$ positive integers. We consider the alphabet $\left\{0,\ldots,k-1\right\}$, where $k=p_1^{k_1}\cdots p_d^{k_d}$.

For every $1 \leq i \leq d$, we consider a difference matrix $M_i$ of $D(p_i^{k_i},p_i^{k_i},\mathbb{Z}^{k_i}_{p_i})$, to which we associate a function $g^{i}(j,n)=(g^{i}_1(j,n),\ldots,g^{i}_{k_i}(j,n))$ and a sequence $a^{i}(n)=(a^{i}_1(n),\ldots,a^{i}_{k_i}(n))$ as previously defined.
We define the sequence $(\hat{a}(n))_{n \geq 0}$ by 

$$\hat{a}(n)=(a^{1}(n) \ \text{mod} \ p_1,\ldots, a^{d}(n) \ \text{mod} \ p_d).$$

Moreover, let $0 \leq r_1 < r_2$. Then, as $N \rightarrow \infty$, we have

$$\displaystyle\sum_{n<N}\delta(n+r_1,n+r_2)=N\left(1-\cfrac{1}{k}\right)+O_k\left(\left((r_2-r_1)\text{log}\frac{N^{\frac{1}{d}}}{r_2-r_1}+r_2\right)N^{\frac{d-1}{d}}\right).$$
\end{theorem}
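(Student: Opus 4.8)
The plan is to reduce Theorem~\ref{theo5} to an application of Theorem~\ref{theo4}, exploiting the product structure of the alphabet exactly as Grant et al.~reduced Theorem~\ref{theo2} to Theorem~\ref{theo1}. First I would observe that, by the Chinese Remainder Theorem, $\delta(n+r_1,n+r_2)=1$ precisely when $\hat{a}(n+r_1)\neq\hat{a}(n+r_2)$, i.e.\ when $a^i(n+r_1)\not\equiv a^i(n+r_2)\pmod{p_i}$ for at least one $i$. Writing $\delta_i$ for the order-$2$ correlation coefficient of the $i$-th component sequence (reduced mod $p_i$), we get the inclusion--exclusion identity
\begin{equation*}
\delta(n+r_1,n+r_2)=1-\prod_{i=1}^d\bigl(1-\delta_i(n+r_1,n+r_2)\bigr),
\end{equation*}
so that
\begin{equation*}
\sum_{n<N}\delta(n+r_1,n+r_2)=N-\sum_{\varnothing\neq S\subseteq\{1,\dots,d\}}(-1)^{|S|+1}\sum_{n<N}\prod_{i\in S}\delta_i(n+r_1,n+r_2)+\dots
\end{equation*}
more cleanly, $\sum_{n<N}\delta(n+r_1,n+r_2)=\sum_{\varnothing\neq S}(-1)^{|S|+1}\sum_{n<N}\prod_{i\in S}\delta_i(n+r_1,n+r_2)$.

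The core of the argument is then to estimate, for each nonempty $S$, the sum $\sum_{n<N}\prod_{i\in S}\delta_i(n+r_1,n+r_2)$. Each $\delta_i$ should be expanded via the standard exponential-sum (Fourier) identity on $\mathbb{Z}_{p_i}$: since $\delta_i(m_1,m_2)=1-\tfrac{1}{p_i}\sum_{t=0}^{p_i-1}\mathrm{e}\bigl(\tfrac{t}{p_i}(a^i(m_1)-a^i(m_2))\bigr)$, the product over $i\in S$ becomes a sum over tuples $(t_i)_{i\in S}$ of products of such exponentials. The "main term" comes from all $t_i=0$ and contributes $\prod_{i\in S}(1-1/p_i)\cdot N$; summing the main terms over all $S$ with inclusion--exclusion signs reconstitutes $N(1-1/k)$ exactly (using $\prod_i(1-1/p_i^{k_i})$... — here one must be careful: the component sequences are on $\mathbb{Z}_{p_i}$, not $\mathbb{Z}_{p_i^{k_i}}$, so I would instead work with the full $\mathbb{Z}_{p_i^{k_i}}$-valued sequences $a^i(n)$ and the associated $\delta$ from Theorem~\ref{theo4}, reducing mod $p_i$ only at the very end, or equivalently note that $\hat a$ takes values in an alphabet of size $k=\prod p_i^{k_i}$ via CRT and the relevant component correlation is that of the $\mathbb{Z}_{p_i^{k_i}}$-sequence). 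The "error terms" are the exponential sums with at least one $t_i\neq 0$; each such sum should be controlled by adapting the block-decomposition / cancellation estimate used in the proof of Theorem~\ref{theo4} (digit-wise analysis in base $p_i^{k_i}$), yielding a bound of the shape $(r_2-r_1)\log\frac{N}{r_2-r_1}+r_2$ per factor, but with the key twist that when several $\delta_i$ are multiplied the relevant "period" is the least common multiple $\prod_{i\in S}p_i^{k_i}$, which can be as large as $k$; this is where the factor $N^{(d-1)/d}$ and the $N^{1/d}$ inside the logarithm enter, via splitting $N$ into $O(N^{1/d})$-length runs on which the combined base-$p_1^{k_1}\cdots$ digit is essentially constant.

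Concretely, the key technical step I would isolate is a lemma bounding $\bigl|\sum_{n<N}\prod_{i\in S}\mathrm{e}(\tfrac{t_i}{p_i^{k_i}}(a^i(n+r_1)-a^i(n+r_2)))\bigr|$ for $(t_i)\neq 0$; the approach is to view the tuple $(a^i(n))_{i\in S}$ as (a coordinate of) a single generalized Rudin--Shapiro sequence on $\mathbb{Z}_{\prod_{i\in S}p_i^{k_i}}$ obtained by interleaving, so that the estimate from the proof of Theorem~\ref{theo4} applies with the larger modulus, and then optimize the resulting bound in the number of preserved digits. Summing over the $O_k(1)$ tuples $(t_i)$ and over the $2^d-1$ subsets $S$ (both $O_k(1)$), and absorbing everything into the worst case $|S|=d$ with modulus $k$, gives the claimed error term $O_k\bigl(((r_2-r_1)\log\frac{N^{1/d}}{r_2-r_1}+r_2)N^{(d-1)/d}\bigr)$.

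The main obstacle I anticipate is the bookkeeping that links the multiplicativity over $i$ (which is clean at the level of $\delta$ via CRT and inclusion--exclusion) with the digit-wise cancellation estimate (which lives naturally over a single modulus): one must check that the recursion $a^i(p_i^{k_i}n+j)=a^i(n)+g^i(j,n)$ for the distinct $i$, with their incompatible radices $p_i^{k_i}$, can be synchronized on arithmetic progressions of common length $\asymp N^{1/d}$ so that the exponential sum telescopes correctly — in other words, reproving the Theorem~\ref{theo4}-type estimate for a "mixed-radix" sequence. Handling the interaction of the shifts $r_1,r_2$ with these mixed radices (carries across digit boundaries in $d$ different bases simultaneously) is the delicate point; once that lemma is in place, assembling the final bound is routine.
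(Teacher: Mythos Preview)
Your Fourier/inclusion--exclusion setup is fine and, once you commit to working with the full $\mathbb{Z}_{p_i}^{k_i}$-valued components $a^i$, it unwinds to exactly the exponential-sum decomposition the paper uses:
\[
\sum_{n<N}\delta(n+r_1,n+r_2)=N\Bigl(1-\tfrac1k\Bigr)-\tfrac1k\sum_{\mathbf{h}\neq\mathbf{0}}\sum_{n<N}\mathrm{e}(\mathbf{h}\cdot\mathbf{a}(n)),
\]
with $\mathbf{h}$ ranging over $\prod_i\mathbb{Z}_{p_i}^{k_i}$. So the reduction to bounding $\sum_{n<N}\mathrm{e}(\mathbf{h}\cdot\mathbf{a})$ for each nonzero $\mathbf{h}$ is correct.

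The gap is in your proposed technical route for that bound. You suggest viewing $(a^i(n))_{i}$ as a single Rudin--Shapiro sequence on $\mathbb{Z}_{\prod p_i^{k_i}}$ and reproving a ``mixed-radix'' analogue of Theorem~\ref{theo4}. This is exactly the difficulty the paper \emph{avoids}: there is no single base $q$ for which the combined sequence satisfies $a(qn+j)=a(n)+g(j,n)$, so the inductive mechanism of Theorem~\ref{theo4} has nothing to bite on. The paper never attempts such a lemma. Instead it \emph{factors} the sum via CRT. Choose $s_i$ with $p_i^{s_i}\le N^{1/d}<p_i^{s_i+1}$ and partition $\{n<N\}$ into the simultaneous residue classes $P_{\mathbf{b}}=\{n:n\equiv b_i\pmod{p_i^{s_i}}\ \forall i\}$. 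On such a class, writing $n=n_ip_i^{s_i}+b_i$, Lemma~\ref{lem2} gives $a^i_j(n+r)-a^i_j(n)=a^i_j(b_i+r)-a^i_j(b_i)+\mu_{i,j}$, where $\mu_{i,j}$ vanishes unless a carry propagates past the $s_i$-th digit. After discarding the $O_k(rN^{1-1/d})$ classes where some carry occurs, the remaining sum factors cleanly:
\[
\sum_{\mathbf{b}}\mathrm{e}(\mathbf{h}\cdot\mathbf{a}'')\cdot\#\{n<N:n\in P_{\mathbf b}\}
=\Bigl(\prod_{i=1}^{d}\sum_{b_i<p_i^{s_i}}\mathrm{e}(\mathbf{h}^i\cdot\mathbf{a}^i)\Bigr)\Bigl(\tfrac{N}{\prod_i p_i^{s_i}}+O(1)\Bigr).
\]
Now comes the point you are missing: since $\mathbf{h}\neq\mathbf{0}$, there is \emph{one} index $l$ with $\mathbf{h}^l\neq\mathbf{0}$, and the $l$-th factor is precisely the sum $S_{N^{1/d}}(h^l_1,\dots,h^l_{k_l})$ to which Theorem~\ref{theo4} applies \emph{as a black box}, giving $O_{p_l,k_l}(r\log(N^{1/d}/r)+r)$. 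The other $d-1$ factors are bounded trivially by $p_i^{s_i}\le N^{1/d}$. That trivial bound on $d-1$ factors is exactly the source of the $N^{(d-1)/d}$ in the error term.

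So you do not need a new mixed-radix cancellation lemma; you need the CRT factorisation, a carry-counting argument for the exceptional classes, and a single invocation of Theorem~\ref{theo4} on one coordinate. Your intuition about ``synchronizing on arithmetic progressions of length $\asymp N^{1/d}$'' was pointing in the right direction, but the payoff is factorisation followed by one nontrivial factor, not a telescoping estimate across all $d$ bases at once.
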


In the same way as before, we obtain the following corollary.

\begin{cor}
In the setting of Theorem~\ref{theo4}, if $r_2=o(N^{\frac{1}{d}})$ then
$$\displaystyle\sum_{n<N}\delta(n+r_1,n+r_2)\sim N\left(1-\cfrac{1}{k}\right).$$
\end{cor}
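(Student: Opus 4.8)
The plan is to reduce the statement, by orthogonality of additive characters, to an estimate for a single product exponential sum, and then to exploit that $p_1,\dots,p_d$ are distinct primes --- so the bases $q_i:=p_i^{k_i}$ are pairwise coprime --- together with the prime-power case (Theorem~\ref{theo4}), in order to decouple that sum at the common scale $N^{1/d}$. Concretely, write $\sum_{n<N}\delta(n+r_1,n+r_2)=N-\#\{n<N:\hat a(n+r_1)=\hat a(n+r_2)\}$, note that $\hat a(n+r_1)=\hat a(n+r_2)$ holds precisely when $a^{i}(n+r_1)\equiv a^{i}(n+r_2)\pmod{p_i}$ for every $i$, expand each of these congruence indicators over the additive characters of $\mathbb Z_{p_i}^{k_i}$, and pull out the trivial character; with $\Delta a^{i}(n):=a^{i}(n+r_1)-a^{i}(n+r_2)$ this gives
\begin{equation*}
\sum_{n<N}\delta(n+r_1,n+r_2)=N\Bigl(1-\frac1k\Bigr)-\frac1k\sum_{\mathbf t\neq\mathbf 0}S(\mathbf t),\qquad S(\mathbf t):=\sum_{n<N}\prod_{i=1}^{d}\mathrm e\!\left(\frac{\mathbf t_i\cdot\Delta a^{i}(n)}{p_i}\right),
\end{equation*}
the sum running over the $O_k(1)$ nonzero tuples $\mathbf t=(\mathbf t_1,\dots,\mathbf t_d)\in\mathbb Z_{p_1}^{k_1}\times\cdots\times\mathbb Z_{p_d}^{k_d}$. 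It thus suffices to prove $S(\mathbf t)\ll_k N^{(d-1)/d}E$ for each such $\mathbf t$, where $E:=(r_2-r_1)\log\frac{N^{1/d}}{r_2-r_1}+r_2$; and if $r_2\geq N^{1/d}$ this is immediate, since then $N^{(d-1)/d}E\geq N\geq\sum_{n<N}\delta$. So from now on assume $r_2<N^{1/d}$ and fix $i_0$ with $\mathbf t_{i_0}\neq\mathbf 0$.

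For each $i$, let $q_i^{L_i}$ be the least power of $q_i$ that is $\geq N^{1/d}$, so $q_i^{L_i}\asymp_k N^{1/d}$ and $\prod_i q_i^{L_i}\geq N$. Unwinding the recursion of Definition~\ref{def7} and using that $g^{i}(j,n)$ depends only on $n\bmod q_i$ yields the two-block formula $a^{i}(m)=a^{i}(0)+\sum_{\ell\geq0}g^{i}(\varepsilon_\ell(m),\varepsilon_{\ell+1}(m))$, where $\varepsilon_\ell(m)$ is the $\ell$-th base-$q_i$ digit of $m$. Hence, whenever $n$ avoids the set $B_i:=\{n:\ n\bmod q_i^{L_i}\geq q_i^{L_i}-r_2\}$ --- so that $n+r_1$ and $n+r_2$ share all base-$q_i$ digits at positions $\geq L_i$ --- the terms with $\ell\geq L_i$ in $\Delta a^{i}(n)$ cancel and $\Delta a^{i}(n)$ depends only on $n$ modulo $Q_i:=q_i^{L_i+1}$; write $\Delta a^{i}(n)=F_i(n\bmod Q_i)$ there. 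Split $S(\mathbf t)$ according to whether or not $n$ lies in some $B_i$. The union $\bigcup_i B_i$ meets $[0,N)$ in at most $\sum_i r_2(N/q_i^{L_i}+1)\ll_k r_2N^{(d-1)/d}$ integers, so contributes $\ll_k r_2N^{(d-1)/d}$ to $S(\mathbf t)$. On the complementary ``good'' set the summand equals $\Phi(n):=\prod_i\psi_i(n\bmod Q_i)$, where $\psi_i\colon\mathbb Z/Q_i\to\mathbb C$ is $\mathrm e(\mathbf t_i\cdot F_i(\cdot)/p_i)$ off $B_i$ and $0$ on $B_i$; thus $|\psi_i|\leq1$, and $\sum_{s}\psi_i(s)$ differs by at most $|B_i|=O_k(r_2)$ from $\sum_{s<Q_i}\mathrm e(\mathbf t_i\cdot\Delta a^{i}(s)/p_i)$. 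In particular, by Theorem~\ref{theo4} --- more precisely the character-sum bound underlying its proof, with $N$ replaced by $Q_{i_0}\asymp_k N^{1/d}$ and with $\mathbf t_{i_0}\neq\mathbf 0$ --- we get $|\sum_s\psi_{i_0}(s)|\ll_k E$, while trivially $|\sum_s\psi_i(s)|\leq Q_i\asymp_k N^{1/d}$ for every $i$.

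Since the $q_i$ are powers of distinct primes the moduli $Q_i$ are pairwise coprime, so $\Phi$ is periodic of period $P:=\prod_i Q_i$ with $N\leq P\ll_k N$; the Chinese Remainder Theorem factors a complete period, $\sum_{n<P}\Phi(n)=\prod_i\sum_s\psi_i(s)$, and therefore $|\sum_{n<P}\Phi(n)|\ll_k E\prod_{i\neq i_0}Q_i\ll_k E\,N^{(d-1)/d}$. Writing the good part of $S(\mathbf t)$ as $\frac NP\sum_{n<P}\Phi(n)$ plus an oscillatory remainder, the first piece is $\leq\frac NP\cdot E\prod_{i\neq i_0}Q_i=E\,\frac N{Q_{i_0}}\ll_k E\,N^{(d-1)/d}$, which is exactly the bound sought.

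The crux is the oscillatory remainder $\sum_{n<N}\Phi(n)-\frac NP\sum_{n<P}\Phi(n)$: because $P\asymp_k N$ there is at most one complete period inside $[0,N)$, so bounding the leftover block trivially only yields $O(N)$, which is useless. I would instead expand $\Phi$ in its finite Fourier series modulo $P$, using that the transform is multiplicative along the Chinese Remainder splitting, $\widehat\Phi(\xi)=\prod_i\widehat{\psi_i}(\xi_i)$, together with the standard estimate $|\sum_{n<N}\mathrm e(\xi n/P)|\leq\min(N,(2\|\xi/P\|)^{-1})$. For the indices $i$ with $\mathbf t_i=0$, $\psi_i$ is the indicator of a union of $q_i$ equally spaced intervals, so $\widehat{\psi_i}$ has the expected decay; for $i_0$ one needs the twisted form of Theorem~\ref{theo4}, i.e.\ a bound of the same quality for $\sum_{n<Q_{i_0}}\mathrm e\bigl(\mathbf t_{i_0}\cdot\Delta a^{i_0}(n)/p_{i_0}-\xi n/Q_{i_0}\bigr)$. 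I expect establishing this twisted estimate to be the main difficulty, although the carry analysis behind Theorem~\ref{theo4} is insensitive to an extra linear phase and so its proof should adapt. Summing the resulting essentially geometric series over $\xi$ then gives $S(\mathbf t)\ll_k N^{(d-1)/d}E$, and the theorem follows on summing over the $O_k(1)$ characters. The improvement over Theorem~\ref{theo2} comes precisely from placing every cut-off $q_i^{L_i}$ at the single scale $N^{1/d}$ and paying only the trivial cost $\prod_{i\neq i_0}Q_i\asymp_k N^{(d-1)/d}$ on the remaining $d-1$ coordinates, rather than optimising a free exponent.
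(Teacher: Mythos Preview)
You have vastly overshot the target. The statement is a \emph{corollary} of Theorem~\ref{theo5} (the reference to ``Theorem~\ref{theo4}'' in the statement is evidently a slip; the setting, with $d\geq 2$ and $k=p_1^{k_1}\cdots p_d^{k_d}$, is that of Theorem~\ref{theo5}), and the paper gives no separate proof because none is needed. Once Theorem~\ref{theo5} is in hand, assume $r_2=o(N^{1/d})$; then $r:=r_2-r_1\leq r_2=o(N^{1/d})$, and since $x\log(A/x)=o(A)$ as $x/A\to 0$, the bracket $\bigl(r\log\tfrac{N^{1/d}}{r}+r_2\bigr)$ is $o(N^{1/d})$, so the whole error term is $o(N^{1/d})\cdot N^{(d-1)/d}=o(N)$. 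The asymptotic $\sum_{n<N}\delta\sim N(1-1/k)$ follows at once.

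Instead you have attempted to re-derive the full error bound of Theorem~\ref{theo5} from scratch, by a route different from the paper's --- a Fourier expansion of the periodic summand modulo $P=\prod_i Q_i$ together with CRT factorisation of the Fourier coefficients --- and in doing so you have left a genuine gap. Your argument hinges on a \emph{twisted} version of the character-sum estimate underlying Theorem~\ref{theo4}, namely a bound of the same quality for
\[
\sum_{n<Q_{i_0}}\mathrm e\!\left(\frac{\mathbf t_{i_0}\cdot\Delta a^{i_0}(n)}{p_{i_0}}-\frac{\xi n}{Q_{i_0}}\right)
\]
uniformly in $\xi$. You say ``the carry analysis behind Theorem~\ref{theo4} is insensitive to an extra linear phase and so its proof should adapt,'' but this is speculation, not a proof: under the substitution $n\mapsto p_{i_0}^{k_{i_0}} n+u$ that drives the recursion in the proof of Theorem~\ref{theo4}, the linear phase does not stay inert --- the effective frequency changes at each step --- so the iteration does not close up without further work. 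The paper's own proof of Theorem~\ref{theo5} sidesteps this issue entirely: rather than Fourier-expanding the incomplete sum over $n<N$, it partitions $[0,N)$ into residue classes modulo the pairwise coprime moduli $p_i^{s_i}$, isolates a ``good'' set $\mathcal B^r$ on which the carry terms $\mu_{i,j}$ vanish identically, and then factors the remaining product sum exactly by the Chinese Remainder Theorem, invoking only the \emph{untwisted} bound from Theorem~\ref{theo4} on a single coordinate. No twisted estimate, and hence none of your third-paragraph machinery, is required.
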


\begin{rem}
By comparing the error terms of Theorems~\ref{theo2} and~\ref{theo5} when the size of the alphabet is squarefree, we observe that when $r_2-r_1=O(1)$, the optimal choice of $\gamma$ in Theorem~\ref{theo2} is achieved when $1-\frac{\gamma}{d}=\gamma$, i.e., $\gamma=\frac{d}{d+1}$. This gives an error term bound by $N^{\frac{d}{d+1}}$. In Theorem~\ref{theo5}, the corresponding error term is bound by $r_2N^{\frac{d-1}{d}}$, therefore, in order to obtain an improvement we need $r_2N^{\frac{d-1}{d}} \ll N^{\frac{d}{d+1}}$, i.e., $r_2=o(N^{\frac{1}{d(d+1)}})$. Thus, if $r_2-r_1=O(1)$ and $r_2=o(N^{\frac{1}{d(d+1)}})$, our result is an improvement for the alphabets where the size is squarefree and with at least two prime numbers.
\end{rem}

\section{Proofs}\label{Section5}

\subsection{Proof of Theorem~\ref{theo4}}

For the proof of Theorem~\ref{theo4}, we need the following lemma.

\begin{lem}\label{lem1}
Let $G$ be a difference matrix of $D(p^k,p^k,\mathbb{Z}^k_p)$. We let $G_1,\ldots,G_k$ denote the matrices obtained from $G$ by taking respectively the first,$\ldots$, the $k$-th coordinate. Let $0 \leq h_1,\ldots,h_k<p$ with $(h_1,\ldots,h_k)\neq (0,\ldots, 0)$. Then the matrix  $H=h_1G_1+\cdots+h_kG_k$ is a difference matrix of $D(p^k,p^k,\mathbb{Z}_p)$.
\end{lem}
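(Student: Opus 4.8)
The plan is to reduce the statement to a one-line counting fact about linear functionals on $\mathbb{Z}_p^k$. First I would fix two distinct column indices $i\neq j$ and write $g_{li}\in\mathbb{Z}_p^k$ for the $(l,i)$-entry of $G$. Since $G\in D(p^k,p^k,\mathbb{Z}_p^k)$ and the group $\mathbb{Z}_p^k$ has exactly $p^k$ elements, while $G$ has exactly $p^k$ rows, the defining property of a difference matrix forces the map $l\mapsto g_{li}-g_{lj}$ to be a \emph{bijection} from $\{1,\ldots,p^k\}$ onto $\mathbb{Z}_p^k$; each element of $\mathbb{Z}_p^k$ is hit exactly once.

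Next I would introduce the $\mathbb{Z}_p$-linear map $\varphi:\mathbb{Z}_p^k\to\mathbb{Z}_p$, $\varphi(x_1,\ldots,x_k)=h_1x_1+\cdots+h_kx_k$. By the definition of $H=h_1G_1+\cdots+h_kG_k$, the $(l,i)$-entry of $H$ is precisely $\varphi(g_{li})$, so by linearity the column-difference entries are $H_{li}-H_{lj}=\varphi(g_{li})-\varphi(g_{lj})=\varphi(g_{li}-g_{lj})$. Combining this with the previous paragraph, as $l$ ranges over all rows the argument $g_{li}-g_{lj}$ ranges once over $\mathbb{Z}_p^k$, hence $H_{li}-H_{lj}$ attains each value $c\in\mathbb{Z}_p$ exactly $\lvert\varphi^{-1}(c)\rvert$ times.

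The only substantive point — and the heart of the argument — is to note that because $(h_1,\ldots,h_k)\neq(0,\ldots,0)$, the functional $\varphi$ is nonzero, and since $\mathbb{Z}_p$ is a field this makes $\varphi$ surjective; its kernel is then a subgroup of $\mathbb{Z}_p^k$ of index $p$, so every fiber $\varphi^{-1}(c)$ is a coset of the kernel and has exactly $p^{k-1}$ elements. Therefore each element of $\mathbb{Z}_p$ occurs exactly $p^{k-1}$ times among the entries of the difference of columns $i$ and $j$ of $H$, and since $i\neq j$ were arbitrary and $p^k=p\cdot p^{k-1}$, this is exactly the statement that $H\in D(p^k,p^k,\mathbb{Z}_p)$. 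I do not expect any real obstacle here; the only care needed is the bookkeeping that ``contains every element of $\mathbb{Z}_p$ equally often'' over $p^k$ rows means ``exactly $p^{k-1}$ times'', and making the surjectivity/equidistribution of the fibers of $\varphi$ explicit.
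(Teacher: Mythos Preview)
Your proof is correct and follows essentially the same approach as the paper: fix two distinct columns, use that the column-difference of $G$ hits every element of $\mathbb{Z}_p^k$ exactly once, and then count preimages under the linear map $(c_1,\ldots,c_k)\mapsto h_1c_1+\cdots+h_kc_k$ to get $p^{k-1}$ occurrences of each value in $\mathbb{Z}_p$. Your framing via the linear functional $\varphi$ and its fibers is a bit cleaner, but the argument is identical in substance.
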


\begin{proof}
We let $(g_1(j,n),\ldots,g_k(j,n))$ denote the element of $G$ at the $j$-th column and the $n$-th row. The difference between two distinct columns $i$ and $j$ of $H$ can be written as

\begin{small}
\begin{align*}
C_{i,j}=\begin{pmatrix}
h_1(g_1(j,0)-g_1(i,0))+\cdots+h_k(g_k(j,0)-g_k(i,0)) \\
\vdots \\
h_1(g_1(j,p^k-1)-g_1(i,p^k-1))+\cdots+h_k(g_k(j,p^k-1)-g_k(i,p^k-1))
\end{pmatrix}.
\end{align*}
\end{small}

As $G$ is a difference matrix, we have 
$$\left\{(g_1(j,n)-g_1(i,n),\ldots,g_k(j,n)-g_k(i,n)), \ 0 \leq n<p^k\right\}=\mathbb{Z}^k_p.$$

Therefore, the elements that appear in $C_{i,j}$ are all the elements of the form $h_1c_1+\cdots+h_kc_k$, for $(c_1,\ldots,c_k) \in \mathbb{Z}^k_p$. Thus, in $C_{i,j}$, for all $d \in \mathbb{Z}_p$, each element appears 

$\#\left\{(c_1,\ldots,c_k)\in \mathbb{Z}^k_p:h_1c_1+\cdots+h_kc_k=d\right\}=p^{k-1}$ times. Consequently, $H$ is a difference matrix of $D(p^k,p^k,\mathbb{Z}_p)$.
\end{proof}

Now, we have all the tools to prove Theorem~\ref{theo4}.

\begin{proof}
Let $0 \leq r_1 < r_2$. We have

\begin{align*}
&\displaystyle\sum_{n<N}\delta(n+r_1,n+r_2)\\
&=N-\displaystyle\sum_{n<N}\cfrac{1}{p^k}\displaystyle\prod_{i=1}^{k}\displaystyle\sum_{0\leq h_i<p}\text{e}\left(\cfrac{h_i}{p} (a_i(n+r_2)-a_i(n+r_1))\right)\\
&=N-\displaystyle\sum_{n<N}\cfrac{1}{p^k}\displaystyle\sum_{0 \leq h_1,\ldots,h_k<p}\text{e}\left(\cfrac{1}{p}\displaystyle\sum_{i=1}^{k} h_i(a_i(n+r_2)-a_i(n+r_1))\right)\\
&=N\left(1-\cfrac{1}{p^k}\right)-\cfrac{1}{p^k}\displaystyle\sum_{\substack{0 \leq h_1,\ldots,h_k<p \\ (h_1,\ldots,h_k)\neq (0,\ldots, 0)}}S_N(h_1,\ldots,h_k),
\end{align*}
with 

$$S_N(h_1,\ldots,h_k)=\displaystyle\sum_{n<N}\text{e}\left(\cfrac{1}{p}\displaystyle\sum_{i=1}^{k} h_i(a_i(n+r_2)-a_i(n+r_1))\right).$$

Put $r=r_2-r_1$.

It suffices to show that for all $0 \leq h_1,\ldots,h_k<p$ with $(h_1,\ldots,h_k)\neq (0,\ldots, 0)$ we have 

$$S_N(h_1,\ldots,h_k)=O_{p,k}\left(r\text{log}\cfrac{N}{r}+r\right).$$

Let $b(n)=h_1a_1(n)+\cdots+h_ka_k(n)$ and $g^*(j,n)=h_1g_1(j,n)+\cdots+h_kg_k(j,n)$ so that $b(p^kn+j)=b(n)+g^*(j,n)$.

By Lemma~\ref{lem1}, for all integers $u$ and $i$ such that $0 \leq u < u+i \leq p^k-1$, the set $\left\{(g^*(u+i,n)-g^*(u,n)) \ : 0 \leq n \leq p^k-1\right\}$ contains $p^{k-1}$ times each element of $\mathbb{Z}_p$.

We define

$$\gamma_N(r,f)=\displaystyle\sum_{n<N}\text{e}\left(\cfrac{b(n+r)-b(n)}{p}\right)\text{e}\left(\cfrac{f(n)}{p}\right),$$
where $f : \mathbb{N} \rightarrow \mathbb{Z}$ is an arbitrary periodic function with period $p^k$.

Let us begin by showing that $\gamma_N(1,f)=O(\text{log} N)$ for $N>p^k$. In order to show this, we decompose $n$ modulo $p^k$. For this purpose, we replace $N$ by $p^kN+j$, with $0 \leq j \leq p^k-1$. Then, we have
\begin{align}
\gamma_{p^kN+j}(1,f) &=\displaystyle\sum_{n<p^kN+j}\text{e}\left(\cfrac{1}{p}(b(n+1)-b(n))\right)\text{e}\left(\cfrac{f(n)}{p}\right)\nonumber\\
&=\displaystyle\sum_{u=0}^{p^k-1}\displaystyle\sum_{p^kn+u<p^kN+j}\text{e}\left(\cfrac{1}{p}(b(p^kn+u+1)-b(p^kn+u))\right)\text{e}\left(\cfrac{f(u)}{p}\right)\nonumber\\
&=\displaystyle\sum_{u=0}^{j-1}\text{e}\left(\cfrac{1}{p}(b(p^kN+u+1)-b(p^kN+u))\right)\text{e}\left(\cfrac{f(u)}{p}\right)\label{triv}\\
&+\displaystyle\sum_{u=0}^{p^k-2}\text{e}\left(\cfrac{f(u)}{p}\right)\displaystyle\sum_{0\leq n<N}\text{e}\left(\cfrac{1}{p}(b(p^kn+u+1)-b(p^kn+u))\right)\label{summaj1}\\
&+\text{e}\left(\cfrac{f(p^k-1)}{p}\right)\displaystyle\sum_{0\leq n<N}\text{e}\left(\cfrac{1}{p}(b(p^kn+p^k)-b(p^kn+p^k-1)\right)\label{summaj2}.
\end{align}

The term \eqref{triv} is trivially bounded by $j \leq p^k-1$.

For \eqref{summaj1} we have for $0 \leq u \leq p^k-2$,

$\displaystyle\sum_{0\leq n<N}\text{e}\left(\cfrac{1}{p}(b(p^kn+u+1)-b(p^kn+u))\right)$
\begin{align*}
&=\displaystyle\sum_{0\leq n<N}\text{e}\left(\cfrac{1}{p}(b(n)+g^*(u+1,n)-b(n)-g^*(u,n))\right)\\
&=\displaystyle\sum_{0\leq n<N}\text{e}\left(\cfrac{1}{p}(g^*(u+1,n)-g^*(u,n))\right).
\end{align*}

For $0 \leq n \leq p^k-1$ and fixed $u$, the differences $g^*(u+1,n)-g^*(u,n)$ take $p^{k-1}$ times every value of $\mathbb{Z}_p$. Therefore, this sum is bounded by $\cfrac{p^k}{2}$. Consequently, the sum~\eqref{summaj1} is bounded by $\cfrac{(p^k-1)p^k}{2}$.

Finally, for \eqref{summaj2} we have

$\displaystyle\sum_{0\leq n<N}\text{e}\left(\cfrac{1}{p}(b(p^kn+p^k)-b(p^kn+p^k-1)\right)$
\begin{align*}
&=\displaystyle\sum_{0\leq n<N}\text{e}\left(\cfrac{1}{p}(b(n+1)+g^*(0,n+1)-b(n)-g^*(p^k-1,n)\right)\\
&=\displaystyle\sum_{0\leq n<N}\text{e}\left(\cfrac{1}{p}(b(n+1)-b(n))\right)\text{e}\left(\cfrac{\tilde{f}(n)}{p}\right),
\end{align*}
where $\tilde{f}(n)=g^*(0,n+1)-g^*(p^k-1,n)$ is periodic with period $p^k$.

We deduce that $|\gamma_{p^kN+j}(1,f)| \leq |\gamma_{N}(1,\tilde{f})| + \cfrac{(p^k-1)(p^k+2)}{2}$.

Moreover, since $|\gamma_{n}(1,f)| \leq p^k-1$ for $1 \leq n \leq p^k-1$ and all periodic functions $f$ with period $p^k$, it follows by induction that for all periodic functions $f$ with period $p^k$ and for all $N>p^k$,

\begin{equation}
|\gamma_{N}(1,f)| \leq \cfrac{(p^k-1)(p^k+2)}{2k\text{log}p} \ \text{log}N+p^k-1.\label{maj1}
\end{equation}

Indeed, suppose that for $N>p^k$ we have~\eqref{maj1} for all periodic functions $f$ with period $p^k$. Then, let $f$ be a periodic function with period $p^k$ and $0 \leq j \leq p^k-1$. We have 
\begin{align*}
|\gamma_{p^kN+j}(1,f)|
&\leq|\gamma_{N}(1,\tilde{f})|+\cfrac{(p^k-1)(p^k+2)}{2}\\
&\leq \cfrac{(p^k-1)(p^k+2)}{2k\text{log}p} \ \text{log}N+p^k-1+\cfrac{(p^k-1)(p^k+2)}{2}\\
&\leq \cfrac{(p^k-1)(p^k+2)}{2k\text{log}p} \ (\text{log}N+k\text{log}p)+p^k-1\\
&\leq \cfrac{(p^k-1)(p^k+2)}{2k\text{log}p} \ \text{log}(p^kN+j)+p^k-1.
\end{align*}

We note that the sum $\gamma_N(0,f)=\displaystyle\sum_{n<N}\text{e}\left(\cfrac{f(n)}{p}\right)$ satisfies 

\begin{equation}
|\gamma_{N}(0,f)| \leq \cfrac{p^k}{2} \ \text{if} \ f(\left\{0,\ldots,p^k-1\right\}) \ \text{contains} \ p^{k-1} \ \text{times each element of} \ \mathbb{Z}_p.\label{maj2}
\end{equation}

Now, let us consider the general case with $r=p^kM+i > 0$ where $M \geq 0$ and $0 \leq i \leq p^k-1$ but $(M,i)\neq (0,0)$. We have
\begin{align}
\gamma_{p^kN+j}&(p^kM+i,f)\nonumber\\
&=\displaystyle\sum_{n<p^kN+j}\text{e}\left(\cfrac{1}{p}(b(n+p^kM+i)-b(n))\right)\text{e}\left(\cfrac{f(n)}{p}\right)\nonumber\\
&=\displaystyle\sum_{n<p^kN}\text{e}\left(\cfrac{1}{p}(b(n+p^kM+i)-b(n))\right)\text{e}\left(\cfrac{f(n)}{p}\right)+O_{p,k}(1)\nonumber\\
&=\displaystyle\sum_{u=0}^{p^k-1}\displaystyle\sum_{0\leq n<N}\text{e}\left(\cfrac{1}{p}(b(p^kn+u+p^kM+i)-b(p^kn+u))\right)\text{e}\left(\cfrac{f(u)}{p}\right)\nonumber\\
&\quad +O_{p,k}(1)\nonumber\\
&=\displaystyle\sum_{u=0}^{p^k-1}\text{e}\left(\cfrac{f(u)}{p}\right)\displaystyle\sum_{0\leq n<N}\text{e}\left(\cfrac{1}{p}(b(p^kn+u+p^kM+i)-b(p^kn+u))\right)\label{form1}\\
&\quad +O_{p,k}(1)\nonumber,
\end{align}
where the implied constant comes from the terms $n=N$ and is bounded by $p^k-1$. The last part consists in estimating the sum given in \eqref{form1}. First, we suppose that $i \neq 0$. Then

$\displaystyle\sum_{u=0}^{p^k-1}\text{e}\left(\cfrac{f(u)}{p}\right)\displaystyle\sum_{0\leq n<N}\text{e}\left(\cfrac{1}{p}(b(p^kn+u+p^kM+i)-b(p^kn+u))\right)$
\begin{align*}
&=\displaystyle\sum_{u=0}^{p^k-1-i}\text{e}\left(\cfrac{f(u)}{p}\right)\displaystyle\sum_{0\leq n<N}\text{e}\left(\cfrac{1}{p}(b(n+M)+g^*(u+i,n+M)-b(n)-g^*(u,n))\right)\\ 
&+\displaystyle\sum_{u=p^k-i}^{p^k-1}\text{e}\left(\cfrac{f(u)}{p}\right)\\
&\times\displaystyle\sum_{0\leq n<N}\text{e}\left(\cfrac{1}{p}(b(n+M+1)+g^*(u+i-p^k,n+M+1)-b(n)-g^*(u,n))\right)\\ 
&=\displaystyle\sum_{u=0}^{p^k-1-i}\text{e}\left(\cfrac{f(u)}{p}\right)\displaystyle\sum_{0\leq n<N}\text{e}\left(\cfrac{1}{p}(b(n+M)-b(n))\right)\text{e}\left(\cfrac{f_1(n)}{p}\right)\\
&+\displaystyle\sum_{u=p^k-i}^{p^k-1}\text{e}\left(\cfrac{f(u)}{p}\right)\displaystyle\sum_{0\leq n<N}\text{e}\left(\cfrac{1}{p}(b(n+M+1)-b(n))\right)\text{e}\left(\cfrac{f_2(n)}{p}\right),
\end{align*}
with $f_1(n)=g^*(u+i,n+M)-g^*(u,n)$ for $0 \leq u \leq p^k-1-i$,\\
and $f_2(n)=g^*(u+i-p^k,n+M+1)-g^*(u,n)$ for $p^k-i \leq u \leq p^k-1$.

For the sake of simplicity, here and later on, we do not write down the dependency on $u$ of these functions. Thus

\begin{align*}
\left|\displaystyle\sum_{u=0}^{p^k-1}\text{e}\left(\cfrac{f(u)}{p}\right)\displaystyle\sum_{0\leq n<N}\text{e}\left(\cfrac{1}{p}(b(p^kn+u+p^kM+i)-b(p^kn+u))\right)\right|\\
\leq\left|\displaystyle\sum_{u=0}^{p^k-1-i}\text{e}\left(\cfrac{f(u)}{p}\right)\gamma_N(M,f_1)\right|+\left|\displaystyle\sum_{u=p^k-i}^{p^k-1}\text{e}\left(\cfrac{f(u)}{p}\right)\gamma_N(M+1,f_2)\right|.
\end{align*}
Let $\tilde{f_1}$ and $\tilde{f_2}$ be two functions such that $|\gamma_N(M,\tilde{f_1})|=\displaystyle\max_{0\leq u \leq p^k-1-i}|\gamma_N(M,f_1)|$ and $|\gamma_N(M,\tilde{f_2})|=\displaystyle\max_{p^k-i\leq u \leq p^k-1}|\gamma_N(M,f_2)|$. We deduce the following estimate:

\begin{equation}
|\gamma_{p^kN+j}(p^kM+i,f)| \leq (p^k-i)|\gamma_N(M,\tilde{f_1})|+i|\gamma_N(M+1,\tilde{f_2})|+p^k-1.\label{maj3}
\end{equation}

Let us substitute $M=0$ in \eqref{maj3}.

Since $i \neq 0$, the image of the set $\left\{0,\ldots,p^{k-1}\right\}$ by the function $f_1(n)=g^*(u+i,n)-g^*(u,n)$ is the multiset $\{\underbrace{0,\ldots,0}_{p^{k-1}},\ldots,\underbrace{p-1,\ldots,p-1}_{p^{k-1}}\}$. 

Using \eqref{maj1} and \eqref{maj2} we therefore get

$$(p^k-i)|\gamma_N(0,\tilde{f_1})| \leq (p^k-i)\times \cfrac{p^k}{2}.$$
and

$$i|\gamma_N(1,\tilde{f_2})| \leq i\left(\cfrac{(p^k-1)(p^k+2)}{2k\text{log}p} \ \text{log}N+p^k-1\right).$$

Therefore,
\begin{align*}
|\gamma_{p^kN+j}(i,f)| &\leq (p^k-i)\cfrac{p^k}{2}+i\left(\cfrac{(p^k-1)(p^k+2)}{2k\text{log}p} \  \text{log}N+p^k-1\right)+p^k-1\\
&\leq i\left(\cfrac{(p^k-1)(p^k+2)}{2k\text{log}p} \ \text{log}N\right)+(p^k-i)\cfrac{p^k}{2}+i(p^k-1)+p^k-1\\
&\leq \cfrac{(p^k-1)^2(p^k+2)}{2k\text{log}p} \ \text{log}N+\cfrac{p^k}{2}(p^k-i+2i+2)-i-1\\
&\leq \cfrac{(p^k-1)^2(p^k+2)}{2k\text{log}p} \ \text{log}N+\cfrac{p^k}{2}(2p^k+1)-p^k.
\end{align*}

Thus, for all $1\leq i \leq p^k-1$ and all periodic functions $f$ with period $p^k$, we have, for $N>p^k$,

\begin{equation}
|\gamma_{N}(i,f)| \leq \cfrac{(p^k-1)^2(p^k+2)}{2k\text{log}p} \ \text{log}\cfrac{N}{p^k}+\cfrac{p^k}{2}(2p^k+1)-p^k.\label{maj4}
\end{equation}
We now establish a bound for $i=0$. For $0 \leq u \leq p^k-1$ we have
$$b(p^kn+u+p^kM)-b(p^kn+u)=b(n+M)-b(n)+g^*(u,n+M)-g^*(u,n)$$
and therefore, for $M\neq 0$, by~\eqref{form1}
\begin{equation}
|\gamma_{p^kN+j}(p^kM,f)| \leq \displaystyle\sum_{u=0}^{p^k-1}|\gamma_N(M,f_3)|+p^k-1 \label{maj5}
\end{equation}
with $f_3(n)=g^*(u,n+M)-g^*(u,n)$. Using \eqref{maj1} and substituting $M=1$ in \eqref{maj5}, we deduce, for $N>p^k$,

$$|\gamma_{N}(p^k,f)| \leq p^k\left(\cfrac{(p^k-1)(p^k+2)}{2k\text{log}p} \ \text{log}\cfrac{N}{p^k}+p^k\right).$$

Hence using \eqref{maj4}, for all $N>p^k$ and for $1 \leq i \leq p^k$,

\begin{equation}
|\gamma_{N}(i,f)| \leq p^k\left(\cfrac{(p^k-1)(p^k+2)}{2k\text{log}p} \ \text{log}\cfrac{N}{p^k}+p^k\right).\label{maj6}
\end{equation}

Using \eqref{maj3}, we have for $1 \leq i \leq p^k-1$, $0 \leq m \leq p^{k(s-1)}(p^k-1)-1$, $M=p^{k(s-1)}+m$ with $s \geq 1$, and for all $N>p^{k(s+1)}$,

\begin{align*}
|\gamma_{p^kN+j}&(p^k(p^{k(s-1)}+m)+i,f)|\\
&\leq (p^k-i)|\gamma_{N}(p^{k(s-1)}+m,\tilde{f_1})|+i|\gamma_{N}(p^{k(s-1)}+m+1,\tilde{f_2})|+p^k-1\\
&\leq p^k\max(|\gamma_{N}(p^{k(s-1)}+m,\tilde{f_1})|,|\gamma_{N}(p^{k(s-1)}+m+1,\tilde{f_2})|)+p^k-1.
\end{align*}

Let $N=p^kN_1+j_1$. Depending on whether $p^k$ is a factor or not of $p^{k(s-1)}+m$ (resp. $p^{k(s-1)}+m+1$), we can use \eqref{maj3} or \eqref{maj5} to bound $|\gamma_{p^kN_1+j_1}(p^{k(s-1)}+m,\tilde{f_1})|$ (resp. $|\gamma_{p^kN_1+j_1}(p^{k(s-1)}+m+1,\tilde{f_2})|$). By iterating $s$ times, and using \eqref{maj6} for the last bound, we obtain for $r=p^{ks}+1,\ldots,p^{ks}+p^k-1,p^{ks}+p^k+1,\ldots,p^{k(s+1)}-p^k-1,p^{k(s+1)}-p^k+1,\dots,p^{k(s+1)}-1$ with $s \geq 1$, and for all $N>p^{k(s+1)}$,

\begin{align}
|\gamma_{N}(r,f)|\leq p^{ks}\left(p^k\cfrac{(p^k-1)(p^k+2)}{2k\text{log}p} \ \text{log}\cfrac{N}{p^{k(s+1)}}+p^k+1\right)+\displaystyle\sum_{j=0}^{s-1}(p^k-1)p^{kj}.\label{maj7}
\end{align}

For $r=p^{ks}+p^k,p^{ks}+2p^k,\ldots,p^{k(s+1)}$ we use \eqref{maj5}. Let $\tilde{f_3}$ be a function such that $|\gamma_N(M,\tilde{f_3})|=\displaystyle\max_{0\leq u \leq p^k-1}|\gamma_N(M,f_3)|$. Then, we have for all $N>p^{k(s+1)}$.

\begin{align*}
|\gamma_{p^kN +j}(r,f)| &\leq \displaystyle\sum_{u=0}^{p^k-1}|\gamma_{N}(\frac{r}{p^k},f_3)|+p^k-1\\
&\leq p^k|\gamma_{N}(\frac{r}{p^k},\tilde{f_3})|+p^k-1.
\end{align*}

We can then again iterate \eqref{maj3} or \eqref{maj5}, and \eqref{maj6} for the last bound. With \eqref{maj6} and \eqref{maj7}, we deduce for $r=p^{ks}+1,\ldots,p^{k(s+1)}$ with $s \geq 0$ and for all $N>p^{k(s+1)}$,

\begin{align*}
|\gamma_{N}(r,f)|&\leq p^{ks}\left(p^k\cfrac{(p^k-1)(p^k+2)}{2k\text{log}p} \ \text{log}\cfrac{N}{p^{k(s+1)}}+p^k+1\right)+\displaystyle\sum_{j=0}^{s-1}(p^k-1)p^{kj}\\
&\leq p^{ks}\left(p^k\cfrac{(p^k-1)(p^k+2)}{2k\text{log}p}\right)\text{log}\cfrac{N}{p^{k(s+1)}}+p^{ks}(p^k+2)-1.
\end{align*}

Finally, for all $N>rp^k$, we have 

$$|\gamma_{N}(r,f)| \leq r\left(p^k\cfrac{(p^k-1)(p^k+2)}{2k\text{log}p}\right)\text{log}\cfrac{N}{r}+r(p^k+2).$$

This completes the proof of Theorem~\ref{theo4}.
\end{proof}

\subsection{Proof of Theorem~\ref{theo5}}

Let $n \in \mathbb{N}$. We let $[\alpha_s,\alpha_{s-1},\ldots,\alpha_1,\alpha_0]_k$ denote the standard base-$k$ representation of $n$, where $\alpha_s \neq 0$ is the most significant digit, so that $n=\alpha_s k^s+\alpha_{s-1} k^{s-1}+\cdots+\alpha_1 k+\alpha_0$. We take the convention that $\alpha_{s+1}=0$. For the proof of Theorem~\ref{theo5} we will need the following elementary lemma:

\begin{lem}\label{lem2}
Let $k \geq 2$ and let $(a(n))_{n \geq 0}$ be a sequence associated to a generalized Rudin--Shapiro sequence, in the sense of Definition~\ref{def3}, which satisfies the relation 
$$a(nk+j)=a(n)+g(j,n), \quad 0\leq j \leq k-1, \quad n\geq 0, \quad (j,n)\neq(0,0).$$

Then, for $n=[\alpha_s,\alpha_{s-1},\ldots,\alpha_1,\alpha_0]_k$ we have 

$$a(n)=a(\alpha_s)+\displaystyle\sum_{i=0}^{s-1}g(\alpha_i,\alpha_{i+1})=a(0)+\displaystyle\sum_{i=0}^{s}g(\alpha_i,\alpha_{i+1}).$$
\end{lem}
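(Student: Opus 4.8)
The plan is to prove the two displayed identities by induction on $s$, the index of the leading base-$k$ digit of $n$; one may assume $n \geq 1$, since the value $n=0$ is vacuous for a digit expansion with $\alpha_s \neq 0$. The only structural fact needed, beyond the defining recurrence $a(nk+j) = a(n) + g(j,n)$, is that for each fixed $j$ the map $n \mapsto g(j,n)$ has period $k$ (Definition~\ref{def3}); this is what lets one replace the true second argument of $g$ by its residue modulo $k$, which is exactly the relevant digit. I would also deduce the second identity from the first via the special instance $a(\alpha_s) = a(\alpha_s + 0 \cdot k) = a(0) + g(\alpha_s, 0) = a(0) + g(\alpha_s, \alpha_{s+1})$, which is valid because $\alpha_s \neq 0$ and $\alpha_{s+1} = 0$ by convention.

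For the base case $s = 0$ one has $n = \alpha_0$ with $1 \leq \alpha_0 \leq k-1$, so $a(n) = a(\alpha_0) = a(\alpha_s)$ while the sum $\sum_{i=0}^{-1}$ is empty, giving the first identity; the second then follows from the instance above applied with $\alpha_0$ in place of $\alpha_s$. For the inductive step, write $n = [\alpha_s,\ldots,\alpha_1,\alpha_0]_k$ and set $m = [\alpha_s,\ldots,\alpha_1]_k = \lfloor n/k \rfloor$, so that $n = mk + \alpha_0$ and, since $\alpha_s \neq 0$, one has $m \geq 1$; in particular $(\alpha_0, m) \neq (0,0)$, so the recurrence gives $a(n) = a(m) + g(\alpha_0, m)$. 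Since $m \equiv \alpha_1 \pmod k$, periodicity of $g$ in its second variable gives $g(\alpha_0, m) = g(\alpha_0, \alpha_1)$. The integer $m$ has leading digit $\alpha_s$ with index $s-1$, so the induction hypothesis yields $a(m) = a(\alpha_s) + \sum_{i=1}^{s-1} g(\alpha_i,\alpha_{i+1})$; adding $g(\alpha_0,\alpha_1)$, which is precisely the $i=0$ term, gives $a(n) = a(\alpha_s) + \sum_{i=0}^{s-1} g(\alpha_i,\alpha_{i+1})$, and the second identity follows as before.

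This lemma is essentially a bookkeeping statement, so I do not expect a genuine obstacle. The two points that require a little care are (i) verifying that the excluded pair $(0,0)$ never arises in the instances of the recurrence that are invoked — ensured by $\alpha_s \neq 0$, hence $m \geq 1$ — and (ii) keeping the re-indexing of digits consistent when passing from $n$ to $\lfloor n/k \rfloor$. It is also worth stating explicitly where the periodicity of $n \mapsto g(j,n)$ is used, since that is the only place the hypothesis ``in the sense of Definition~\ref{def3}'' enters.
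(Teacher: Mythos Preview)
Your argument is correct and follows essentially the same route as the paper: induction on $s$, peeling off the least significant digit via the recurrence and the periodicity of $g$ in its second variable, then deducing the second identity from $a(\alpha_s)=a(0)+g(\alpha_s,0)=a(0)+g(\alpha_s,\alpha_{s+1})$. Your version is in fact more explicit about the base case and about why the excluded pair $(0,0)$ never arises, but the underlying proof is the same.
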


\begin{proof}
By definition, the function $g$ is periodic in the second variable with period $k$. By induction on $s$, we have 
\begin{align*}
a(n)&=a(\alpha_s k^s+\alpha_{s-1}k^{s-1}+\cdots+\alpha_1 k+\alpha_0)\\
&=a(\alpha_s k^{s-1}+\alpha_{s-1} k^{s-2}+\cdots+\alpha_2 k+\alpha_1)+g(\alpha_0,\alpha_1)\\
&=\ldots =a(\alpha_s)+\displaystyle\sum_{i=0}^{s-1}g(\alpha_i,\alpha_{i+1}).\qedhere
\end{align*}

Now, since we have $a(\alpha_s)=a(0)+g(\alpha_s,0)=a(0)+g(\alpha_s,\alpha_{s+1})$, we deduce

$$a(n)=a(0)+\displaystyle\sum_{i=0}^{s}g(\alpha_i,\alpha_{i+1}).$$
\end{proof}

We end this section by the proof of Theorem~\ref{theo5}.

\begin{proof}
Let us begin with some notation.

We set $r=r_2-r_1$. Let $N$ be an integer and let $\textbf{b}=(b_1,\ldots,b_d)$, define

$$P_{\textbf{b}}=\left\{n \in \mathbb{N} : \forall i \in \{1,\ldots,d\}, n\equiv b_i \ (\text{mod} \ {p_i}^{s_i}) \right\},$$
where $s_i$ is the unique integer with ${p_i}^{s_i}\leq N^{\frac{1}{d}}<{p_i}^{s_i+1}$. As a first estimate, we have

$$\#\left\{n \in \mathbb{N} : n \in P_{\textbf{b}},\ n<N \right\}=\cfrac{N}{\prod_{i=1}^d{p_i}^{s_i}}+O(1).$$

We consider the sets

$$\mathcal{B}=\left\{(b_1,\ldots,b_d):0\leq b_i < {p_i}^{s_i}\right\},$$
$$\mathcal{B}_0=\left\{(b_1,\ldots,b_d):0\leq b_i < {p_i}^{s_i}-r\right\}.$$

Fix $1\leq i \leq d$ and $1 \leq j \leq k_i$. Now, consider $n$ such that $n=n_i {p_i}^{s_i}+b_i$ where $(b_1,\ldots,b_d)\in\mathcal{B}_0$. Write

$$b_i+r=\beta'_{s_i-1,i}{p_i}^{s_i-1}+\beta'_{s_i-2,i}{p_i}^{s_i-2}+\cdots+\beta'_{0,i},$$
$$b_i=\beta_{s_i-1,i}{p_i}^{s_i-1}+\beta_{s_i-2,i}{p_i}^{s_i-2}+\cdots+\beta_{0,i},$$
where $\beta_{\nu,i},\beta'_{\nu,i} \in \left\{0,1,\ldots,p_i-1\right\}$ for $0\leq \nu < s_i$. Moreover, consider

$$v_i=\text{max}(\kappa : \beta'_{\kappa,i} \neq 0, \ 0 \leq \kappa \leq s_i-1),$$
$$w_i=\text{max}(\kappa : \beta_{\kappa,i} \neq 0, \ 0 \leq \kappa \leq s_i-1),$$ 
which correspond to the uppermost non-zero coefficients in the expansions in base $p_i$. Using the recursive relation of the sequence $(a^{i}_j(n))_{n \geq 0}$, according to Lemma~\ref{lem2} we have on the one hand

$$a^{i}_j(n+r)=a^{i}_j(n_i)+g^{i}_j(\beta'_{s_i-1,i},n_i)+\displaystyle\sum_{\nu=0}^{s_i-2}g^{i}_j(\beta'_{\nu,i},\beta'_{\nu+1,i}),$$
and on the other hand

$$a^{i}_j(n)=a^{i}_j(n_i)+g^{i}_j(\beta_{s_i-1,i},n_i)+\displaystyle\sum_{\nu=0}^{s_i-2}g^{i}_j(\beta_{\nu,i},\beta_{\nu+1,i}).$$

This implies that

\begin{align*}
&a^{i}_j(n+r)-a^{i}_j(n)\\
&=g^{i}_j(\beta'_{s_i-1,i},n_i)+\displaystyle\sum_{\nu=0}^{s_i-2}g^{i}_j(\beta'_{\nu,i},\beta'_{\nu+1,i})-g^{i}_j(\beta_{s_i-1,i},n_i)-\displaystyle\sum_{\nu=0}^{s_i-2}g^{i}_j(\beta_{\nu,i},\beta_{\nu+1,i}).
\end{align*}

Similarly, since $b_i+r=[\beta'_{v_i,i},\ldots,\beta'_{1,i},\beta'_{0,i}]_p$ and $b_i=[\beta_{w_i,i},\ldots,\beta_{1,i},\beta_{0,i}]_p$, and $\beta'_{v_i+1,i}=0$ and $\beta_{w_i+1,i}=0$, by definition of $v_i$ and $w_i$, we obtain 

$$a^{i}_j(b_i+r)=a^{i}_j(0)+\displaystyle\sum_{\nu=0}^{v_i}g^{i}_j(\beta'_{\nu,i},\beta'_{\nu+1,i})$$
and

$$a^{i}_j(b_i)=a^{i}_j(0)+\displaystyle\sum_{\nu=0}^{w_i}g^{i}_j(\beta_{\nu,i},\beta_{\nu+1,i}).$$

Consequently, we have

\begin{equation}
a^{i}_j(n+r)-a^{i}_j(n)=a^{i}_j(b_i+r)-a^{i}_j(b_i)+\mu_{i,j}(b_i,r,n_i)\label{relation}
\end{equation}
where

\begin{small}
\begin{align*}
&\mu_{i,j}(b_i,r,n_i)\\
&=g^{i}_j(\beta'_{s_i-1,i},n_i)-g^{i}_j(\beta_{s_i-1,i},n_i)+\displaystyle\sum_{\nu=v_i+1}^{s_i-2}g^{i}_j(\beta'_{\nu,i},\beta'_{\nu+1,i})-\displaystyle\sum_{\nu=w_i+1}^{s_i-2}g^{i}_j(\beta_{\nu,i},\beta_{\nu+1,i}).
\end{align*}
\end{small}

Moreover, we have $a(n+r)=a(n)$ if and only if $a^{i}_j(n+r)=a^{i}_j(n) \ \text{for all} \ 1 \leq i \leq d \ \text{and} \ 1 \leq j \leq k_i$. In what follows, we use the notation 
$$\textbf{a}=\textbf{a}(n)=\begin{pmatrix}
a^{1}_1(n+r)-a^{1}_1(n) \\
\vdots \\
a^{1}_{k_1}(n+r)-a^{1}_{k_1}(n) \\
\vdots \\
\vdots \\
a^{d}_1(n+r)-a^{d}_1(n) \\
\vdots \\
a^{d}_{k_d}(n+r)-a^{d}_{k_d}(n) \\
\end{pmatrix}$$
for the vector $a(n+r)-a(n)$. We also introduce the notation
$$\textbf{h}=\left(\cfrac{h^{1}_1}{p_1},\ldots,\cfrac{h^{1}_{k_1}}{p_1},\ldots \ldots,\cfrac{h^{d}_1}{p_d},\ldots,\cfrac{h^{d}_{k_d}}{p_d}\right).$$

Thus,

$$\displaystyle\sum_{n<N}\delta(n+r_1,n+r_2)=N\left(1-\cfrac{1}{k}\right)-\cfrac{1}{k}\displaystyle\sum_{n<N}\displaystyle\sum_{\textbf{h}\neq \textbf{0}}\text{e}(\textbf{h}\cdot \textbf{a}).$$

Fix a vector $\textbf{h}\neq \textbf{0}$ such that for all $1 \leq i \leq d$ and all $1 \leq j \leq k_i$ we have $0 \leq h^{i}_j < p_i$.

It suffices to estimate $\displaystyle\sum_{n<N}\text{e}(\textbf{h}\cdot \textbf{a})$. We define

$$\textbf{a'}=\begin{pmatrix}
a^{1}_1(n_1 {p_1}^{s_1}+b_1+r)-a^{1}_1(n_1 {p_1}^{s_1}+b_1) \\
\vdots \\
a^{1}_{k_1}(n_1 {p_1}^{s_1}+b_1+r)-a^{1}_{k_1}(n_1 {p_1}^{s_1}+b_1) \\
\vdots \\
\vdots \\
a^{d}_1(n_d {p_d}^{s_d}+b_d+r)-a^{d}_1(n_d {p_d}^{s_d}+b_d) \\
\vdots \\
a^{d}_{k_d}(n_d {p_d}^{s_d}+b_d+r)-a^{d}_{k_d}(n_d {p_d}^{s_d}+b_d) \\
\end{pmatrix},$$

$$\textbf{a''}=\begin{pmatrix}
a^{1}_1(b_1+r)-a^{1}_1(b_1) \\
\vdots \\
a^{1}_{k_1}(b_1+r)-a^{1}_{k_1}(b_1) \\
\vdots \\
\vdots \\
a^{d}_1(b_d+r)-a^{d}_1(b_d) \\
\vdots \\
a^{d}_{k_d}(b_d+r)-a^{d}_{k_d}(b_d) \\
\end{pmatrix}
\quad \text{and} \quad \boldsymbol{\mu}=\begin{pmatrix}
\mu_{1,1}(b_1,r,n_1) \\
\vdots \\
\mu_{1,k_1}(b_1,r,n_1) \\
\vdots \\
\vdots \\
\mu_{d,1}(b_d,r,n_d) \\
\vdots \\
\mu_{d,k_d}(b_d,r,n_d) \\
\end{pmatrix}
$$

Using \eqref{relation} we have

\begin{align}
\displaystyle\sum_{n<N}\text{e}(\textbf{h}\cdot \textbf{a})&=\displaystyle\sum_{\textbf{b} \in \mathcal{B}_0}\sum_{\substack{n<N \\ n \in P_{\textbf{b}}}}\text{e}(\textbf{h}\cdot \textbf{a'})+\displaystyle\sum_{\textbf{b} \in \mathcal{B} \setminus \mathcal{B}_0}\sum_{\substack{n<N \\ n \in P_{\textbf{b}}}}\text{e}(\textbf{h}\cdot \textbf{a'})\nonumber\\
&=\displaystyle\sum_{\textbf{b} \in \mathcal{B}_0}\sum_{\substack{n<N \\ n \in P_{\textbf{b}}}}\text{e}(\textbf{h}\cdot (\textbf{a''}+\boldsymbol{\mu}))+\displaystyle\sum_{\textbf{b} \in \mathcal{B} \setminus \mathcal{B}_0}\sum_{\substack{n<N \\ n \in P_{\textbf{b}}}}\text{e}(\textbf{h}\cdot \textbf{a'})\nonumber\\
&=\displaystyle\sum_{\textbf{b} \in \mathcal{B}}\text{e}(\textbf{h}\cdot \textbf{a''})\sum_{\substack{n<N \\ n \in P_{\textbf{b}}}}\text{e}(\textbf{h}\cdot \boldsymbol{\mu})\label{sum1}\\
&+\displaystyle\sum_{\textbf{b} \in \mathcal{B} \setminus \mathcal{B}_0}\sum_{\substack{n<N \\ n \in P_{\textbf{b}}}}(\text{e}(\textbf{h}\cdot \textbf{a'})-\text{e}(\textbf{h}\cdot (\textbf{a''}+\boldsymbol{\mu})))\label{sum2}.
\end{align}

Note that \begin{small}$\mathcal{B} \setminus \mathcal{B}_0=\left\{(b_1,\ldots,b_d):0\leq b_i < {p_i}^{s_i}, \ \exists j \in \{1,\ldots,d\}, \ b_j \geq {p_j}^{s_j}-r \right\}.$\end{small}

Then, $\mid \mathcal{B} \setminus \mathcal{B}_0 \mid \ll \displaystyle\sum_{i=1}^d \cfrac{r}{{p_i}^{s_i}}\prod_{j=1}^d {p_j^{s_j}}$.

Therefore, the sum~\eqref{sum2} is trivially bounded by

\begin{align*}
2\mid \mathcal{B} \setminus \mathcal{B}_0 \mid \#\left\{n<N : n \in P_{\textbf{b}}\right\}&\ll_k \left(\displaystyle\sum_{i=1}^d \cfrac{r}{{p_i}^{s_i}}\prod_{j=1}^d {p_j^{s_j}}\right) \left(\cfrac{N}{\prod_{i=1}^d{p_i}^{s_i}}+O(1)\right)\\
&\ll_k \ rN^{1-\frac{1}{d}}.
\end{align*}

We have one of the error terms in the estimate. Now, to finish the proof, we need to estimate \eqref{sum1}. Let

$$\mathcal{B}^r=\left\{\textbf{b} \in \mathcal{B} : v_i=w_i \ \text{and} \ \beta_{v_i,i}=\beta'_{w_i,i}, \ \text{for all} \ 1 \leq i \leq d  \right\}.$$

For every $\textbf{b} \in \mathcal{B}^r$ we have $\mu_{i,j}(b_i,r,n_i)=0$, for all $n < N, n \in P_{\textbf{b}}$. Using a similar argument where $\mathcal{B}^r$ corresponds to $\mathcal{B}_0$, we can bound the sum~\eqref{sum1} by

$$\ll \displaystyle\sum_{\textbf{b} \in \mathcal{B}}\text{e}(\textbf{h}\cdot \textbf{a''})\sum_{\substack{n<N \\ n \in P_{\textbf{b}}}}1+2\mid \mathcal{B} \setminus \mathcal{B}^r \mid\left(\cfrac{N}{\prod_{i=1}^d{p_i}^{s_i}}+O(1)\right).$$

The last part consists in establishing a bound for $\mid \mathcal{B} \setminus \mathcal{B}^r \mid$. Consider $t_i$ such that ${p_i}^{t_i}\leq r<{p_i}^{t_i+1}$. We have to count the number of $b_i$ satisfying $0\leq b_i<{p_i}^{s_i}$ and for which we have a carry propagation from digit $\beta_{v_i,i}$ of $b_i$ when adding $r$. For this, a necessary condition is

$$\beta_{t_i+1,i}=\beta_{t_i+2,i}=\cdots=\beta_{s_i-2,i}=p_i-1.$$

Then, $\mid \mathcal{B} \setminus \mathcal{B}^r \mid \leq \displaystyle\sum_{i=1}^d {p_i}^{t_i+1}+(s_i-2-t_i){p_i}^{t_i+2}$.

Using the fact that $s_i\leq\cfrac{\text{log}N^{\frac{1}{d}}}{\text{log}p_i}$, and $-t_i-1<-\cfrac{\text{log}r}{\text{log}p_i}$, we deduce

$$\mid \mathcal{B} \setminus \mathcal{B}^r \mid \leq \displaystyle\sum_{i=1}^d \left(rp_i+r{p_i}^2\left(\cfrac{\text{log}(N^{\frac{1}{d}})}{\text{log}p_i}-\cfrac{\text{log}r}{\text{log}p_i}\right)\right)\ll_k r\displaystyle\sum_{i=1}^d\text{log}N^{\frac{1}{d}}.$$

For all $1 \leq i \leq d$ and all $1 \leq j \leq k_i$, define $\textbf{h}^{i}=\left(\cfrac{h^{i}_1}{p_i},\ldots,\cfrac{h^{i}_{k_i}}{p_i}\right)$ and $\textbf{a}^{i}=(a^{i}_1(b_i+r)-a^{i}_1(b_i),\ldots,a^{i}_{k_i}(b_i+r)-a^{i}_{k_i}(b_i))$.

By adding all the terms, we have

\begin{align*}
\displaystyle\sum_{n<N}\text{e}(\textbf{h}\cdot \textbf{a})&=\displaystyle\sum_{\textbf{b} \in \mathcal{B}}\text{e}(\textbf{h}\cdot \textbf{a''})\sum_{\substack{n<N \\ n \in P_{\textbf{b}}}}1+O_k\left(rN^{1-\frac{1}{d}}+r\displaystyle\sum_{i=1}^d\text{log}N^{\frac{1}{d}}\right)\\
&=\left(\prod_{i=1}^d\displaystyle\sum_{b_i=0}^{{p_i}^{s_i}-1}\text{e}(\textbf{h}^{i} \cdot \textbf{a}^{i})\right)\left(\cfrac{N}{\prod_{i=1}^d{p_i}^{s_i}}+O(1)\right)+O_k\left(rN^{1-\frac{1}{d}}\right).
\end{align*}

By assumption, $\textbf{h}\neq \textbf{0}$ so there exists $1 \leq l \leq d$ such that $\textbf{h}^{l} \neq \textbf{0}$. With the notation of the proof of Theorem~\ref{theo4} we have 
$$\displaystyle\sum_{n<N^{1/d}}\text{e}(\textbf{h}^{l} \cdot \textbf{a}^{l})=S_{N^{1/d}}(h^{l}_1,\ldots,h^{l}_{k_l})=O_{p_l,k_l}\left(r\text{log}\frac{N^{\frac{1}{d}}}{r}+r\right).$$ 

For $i\neq l$, we bound the other factors trivially, and since $\forall i\in\{1,\ldots,d\}, \ {p_i}^{s_i}\leq N^{\frac{1}{d}}<{p_i}^{s_i+1}$, we obtain

\begin{align*}
\displaystyle\sum_{n<N}\text{e}(\textbf{h}\cdot \textbf{a})&\ll_k \left(N^{1-\frac{1}{d}}+N^{\frac{d-1}{d}}\right)\left(r\text{log}\frac{N^{\frac{1}{d}}}{r}+r\right)+rN^{1-\frac{1}{d}}\\
&\ll_k N^{\frac{d-1}{d}}\left(r\text{log}\frac{N^{\frac{1}{d}}}{r}+r\right).
\end{align*}

For $\textbf{h}\neq \textbf{0}$, we have $\underbrace{p_1\times \cdots\times p_1}_{k_1}\times\cdots\times\underbrace{p_d\times\cdots\times p_d}_{k_d}-1=k-1$ possible choices. Finally we have the estimate 

$$\displaystyle\sum_{n<N}\displaystyle\sum_{\textbf{h}\neq \textbf{0}}\text{e}(\textbf{h}\cdot \textbf{a})\  \ll_k (k-1)\left(N^{\frac{d-1}{d}}\left(r\text{log}\frac{N^{\frac{1}{d}}}{r}+r\right)\right),$$
where the implied constant only depends on $k$. This ends the proof of Theorem~\ref{theo5}.
\end{proof}

\section{Open questions}\label{Section6}

\begin{enumerate}
\item Is it possible to improve the error terms in Theorems~\ref{theo4} and~\ref{theo5}?
\item We have dealt with generalized Rudin--Shapiro sequences. Is it possible to obtain similar results for the discrete correlation of order $2$ for other constructions of pseudorandom sequences?
\item Our work concerns the discrete correlation of order $2$. What happens for correlations of higher order? As in Definition~\ref{def1}, it is possible to define the discrete correlation coefficient of order $m$ (see~\cite[p.346]{grant_bounds_2009}). For a uniform random sequence, Remark~\ref{rem1} still holds in this case, with $C_{\textbf{r}}=1-1/k^{m-1}$ with probability $1$ for all $m \geq 2$. So, a natural question arises: is it possible to build a family of pseudorandom sequences such that we obtain the expected main term for one or several $m \geq 3$ or for all $m \geq 2$? 
\end{enumerate}

\noindent {\bf Acknowledgements.} The author thanks I. Marcovici and T. Stoll for the supervision of this work and their useful advice. This work has been supported by the ANR Graal (ANR-14-CE25-0014), ANR-FWF Mudera (ANR-14-CE34-0009, FWF I-1751-N26) and the R\'egion Grand Est.

\bibliographystyle{amsplain}
\bibliography{Bibliocorr}

\end{document}